\documentclass{amsart}
\usepackage{amssymb,amsmath,amsfonts,amscd,amsthm,pb-diagram,amsbsy,bm}

\textwidth=13.5cm

\theoremstyle{plain}
\newtheorem{theorem}{Theorem}[section]
\newtheorem*{theorem*}{Main Theorem}
\newtheorem{lemma}{Lemma}[section]

\theoremstyle{definition}

\newcommand{\DP}[1]{(\tilde{\nabla}_{#1}P)}

\numberwithin{equation}{section}

\normalsize

\begin{document}

\title[On hypersurfaces of the nearly K\"ahler $\mathbb{S}^6$ and $\mathbb{S}^3
\times\mathbb{S}^3$] {Hypersurfaces of the homogeneous nearly
K\"ahler $\mathbb{S}^6$ and $\mathbb{S}^3\times\mathbb{S}^3$ with
anticommutative structure tensors}

\author[Z. Hu, Z. Yao and X. Zhang]{Zejun Hu, Zeke Yao and Xi Zhang}

\thanks{2010 {\it
Mathematics Subject Classification.} \ 53B35, 53C30, 53C42.}

\keywords{Hypersurface, nearly K\"ahler manifold, Hopf hypersurface,
almost contact structure, shape operator.}

\thanks{This project was supported by NSF of China, Grant Number
11771404.}

\begin{abstract}
Each hypersurface of a nearly K\"ahler manifold is naturally equipped
with two tensor fields of $(1,1)$-type, namely the shape operator
$A$ and the induced almost contact structure $\phi$. In this paper,
we show that, in the homogeneous NK $\mathbb{S}^6$ a hypersurface
satisfies the condition $A\phi+\phi A=0$ if and only if it is totally
geodesic; moreover, similar as for the non-flat complex space forms,
the homogeneous nearly K\"ahler manifold $\mathbb{S}^3\times\mathbb{S}^3$
does not admit a hypersurface that satisfies the condition $A\phi+\phi A=0$.
\end{abstract}

\maketitle

\section{Introduction}\label{sect:1}

The nearly K\"ahler (abbrev. NK) manifold $\mathbb{S}^3 \times
\mathbb{S}^3$ is one of the only four homogeneous $6$-dimensional
nearly K\"ahler spaces (with the remaining three the NK $6$-sphere
$\mathbb{S}^6$, the complex projective space $\mathbb CP^3$ and the
flag manifold $SU(3)/U(1)\times U(1)$, cf. \cite{But1, But2}). Ever
since the groundbreaking research of Bolton-Dillen-Dioos-Vrancken
\cite{B-D-D-V}, people become increasingly interested in the study
of submanifolds of this homogeneous NK $\mathbb{S}^3 \times
\mathbb{S}^3$, and many beautiful results have been established. For
details of the study, besides \cite{B-D-D-V}, we would refer the
readers to \cite{D-L-M-V, H-Z} on almost complex surfaces, to
\cite{B-M-J-L, B-M-J-L-2, D-V-W, H-Z2, Z-D-H-V-W} on Lagrangian
submanifolds, and to \cite{H-Y-Z} on hypersurfaces. It is worth
mentioning that Foscolo and Haskins \cite{F-H} have recently
constructed cohomogeneity one NK structure on both
$\mathbb S^6$ and $\mathbb{S}^3\times\mathbb{S}^3$. Thus, in order
to avoid confusion, from now on in this paper, when we say NK
$\mathbb S^6$ and NK $\mathbb{S}^3 \times \mathbb{S}^3$, we mean
always $\mathbb S^6$ and $\mathbb{S}^3 \times \mathbb{S}^3$ equipped
with the homogeneous NK structures that were elaborate
described in \cite{DOVV} (cf. references therein) and
\cite{B-D-D-V}, respectively.


In the present paper, continuing with our research starting from
\cite{H-Y-Z}, we will focus mainly on hypersurfaces of the NK
$\mathbb{S}^3 \times \mathbb{S}^3$. Recall that given a hypersurface
$M$ of an almost Hermitian manifold with almost complex structure
$J$, it appears on $M$ two naturally defined tensor fields of
$(1,1)$-type: a submanifold structure represented by the shape
operator $A$, and an almost contact structure $\phi$ induced from
$J$. Then, it is an interesting problem to study hypersurfaces with
special relations between $A$ and $\phi$. The first problem one
might study is that the shape operator $A$ and the induced almost
contact structure $\phi$ satisfy the commutativity condition
$A\phi=\phi A$. Indeed, Okumura \cite{Ok} and Montiel-Romero
\cite{M-R} considered real hypersurfaces of the non-flat complex
space forms, and they obtained the classification of such real
hypersurfaces satisfying $A\phi=\phi A$ for complex projective space
\cite{Ok} and complex hyperbolic space \cite{M-R}, respectively.
Moreover, it was shown that hypersurfaces of the homogeneous NK
$\mathbb S^6$ satisfy $A\phi=\phi A$ if and only if they are
geodesic hyperspheres (cf. Theorem 2 of \cite{M} and Remark 2.1 of
\cite{H-Y-Z}). Then following this approach, we have considered a
similar situation for the NK $\mathbb{S}^3\times\mathbb{S}^3$
\cite{H-Y-Z}, our result is the following classification theorem.

\begin{theorem}[cf. \cite{H-Y-Z}]\label{thm:1.1}
Let $M$ be a hypersurface of the homogeneous NK
$\mathbb{S}^3\times\mathbb{S}^3$ that satisfies the condition
$A\phi=\phi A$. Then $M$ is locally given by one of the following
immersions $f_1$, $f_2$ and $f_3$:
\begin{enumerate}
\item[(1)]
$f_1:\ \
\mathbb{S}^3\times\mathbb{S}^2\rightarrow\mathbb{S}^3\times\mathbb{S}^3
\ \ {\rm defined\ by}\ \ (x,y)\mapsto(x,y)$;

\item[(2)]
$f_2:\ \
\mathbb{S}^3\times\mathbb{S}^2\rightarrow\mathbb{S}^3\times\mathbb{S}^3
\ \ {\rm defined\ by}\ \ (x,y)\mapsto(y,x)$;

\item[(3)]
$f_3:\ \
\mathbb{S}^3\times\mathbb{S}^2\rightarrow\mathbb{S}^3\times\mathbb{S}^3
\ \ {\rm defined\ by}\ \ (x,y)\mapsto(\bar{x},y\bar{x})$,
\end{enumerate}
here, $x\in\mathbb{S}^3$, $y\in\mathbb{S}^2$, and as usual
$\mathbb{S}^3$ (resp. $\mathbb{S}^2$) is regarded as the set of the
unit (resp. imaginary) quaternions in the quaternion space
$\mathbb{H}$.
\end{theorem}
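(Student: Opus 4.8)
\smallskip
\noindent\emph{Sketch of the proof.} Fix a local unit normal $\xi$ along $M$ and let $(\phi,U,\eta)$ be the induced almost contact structure: $JX=\phi X+\eta(X)\xi$ for tangent $X$ (so $\phi X$ is the tangential part of $JX$), with $J\xi$ tangent, $U:=-J\xi$, $\phi U=0$, $\eta(U)=1$, $\eta(X)=g(X,U)$. Besides $J$, the homogeneous NK $\mathbb{S}^3\times\mathbb{S}^3$ carries the canonical almost product structure $P$ ($P^2=\mathrm{id}$, $PJ=-JP$, with $\lcc P$ expressible through the NK tensor $G(X,Y):=(\lcc_XJ)Y$), which has to be carried along on $M$: one writes $P\xi=f\,\xi+V$ with $f=g(P\xi,\xi)$ and $V$ tangent, $|V|^2=1-f^2$, and decomposes $PX$ and $G(X,Y)$ into tangential and normal parts. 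The first step is to assemble the fundamental equations on $M$: the Gauss and Codazzi equations (using the known curvature tensor of the ambient), together with the derivative formulas obtained by projecting $\lcc J$ and $\lcc P$, namely $(\nabla_X\phi)Y=\eta(Y)AX-g(AX,Y)U+(\text{tangential part of }G(X,Y))$, $\nabla_XU=\phi AX-(\text{tangential part of }G(X,\xi))$, and the analogous expressions for $X(f)$ and $\nabla_XV$ in terms of $A$, $\phi$, $f$, $V$ and $G$.

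The second step exploits $A\phi=\phi A$. Exactly as in the Okumura argument for complex space forms, applying $\phi$ to $AU$ gives $\phi AU=0$, hence $AU=\alpha U$ for a function $\alpha$; thus $M$ is a Hopf hypersurface and the contact distribution $\mathcal{D}=\ker\eta=U^{\perp}$ is both $A$-invariant and $\phi$-invariant. The genuinely new work is to control the interaction with $P$: feeding $A\phi=\phi A$ and $AU=\alpha U$ into the Codazzi equation and into the derivative formulas for $f$ and $V$, and using the NK identities ($G(X,JY)=-JG(X,Y)$ and $g(G(X,Y),Z)$ totally skew-symmetric) together with the explicit shape of $\lcc P$, I would aim to show that $f$ is locally constant and that the span of $V$ (equivalently of $PU$, $\phi V$, $JV$) breaks up into $A$-invariant lines with prescribed principal curvatures, while $\alpha$ and the eigenvalues of $A$ on the remainder of $\mathcal{D}$ become determined.

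With $f$ constant, a case analysis on the value of $f$ — equivalently on how the normal $\xi$ is positioned relative to the product structure — should leave only finitely many possibilities, each forcing the shape operator into one explicit form. At this point I would bring in the three standard inclusions $f_1,f_2,f_3$ of $\mathbb{S}^3\times\mathbb{S}^2$ into $\mathbb{S}^3\times\mathbb{S}^3$ and compute, for each, the unit normal, the induced data $(\phi,U,\eta)$, the decomposition of $P\xi$, and the shape operator, checking directly that $A\phi=\phi A$ holds and that the resulting constants $(f,\alpha,\dots)$ realize precisely the admissible values found above. Note that $f_2$ is obtained from $f_1$ by the swap isometry $(p,q)\mapsto(q,p)$ of the ambient and $f_3$ by a further ambient symmetry, which helps explain why exactly these three immersions appear.

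Finally, to promote ``the induced data of $M$ coincide with those of some $f_i$'' to ``$M$ is locally congruent to some $f_i$'', I would invoke the uniqueness part of the fundamental theorem of hypersurfaces for the homogeneous NK $\mathbb{S}^3\times\mathbb{S}^3$: since the ambient is homogeneous, a hypersurface is determined up to an ambient isometry by its first and second fundamental forms together with the compatible data $(\phi,U,\eta,f,V)$ satisfying the Gauss--Codazzi and the $J$- and $P$-structure equations. The principal obstacle throughout is precisely this coupling with $P$. In a complex space form $\lcc P$ simply does not occur, so Okumura's computation closes almost at once; here $\lcc P\neq 0$ intertwines $A$, $\phi$, $P$ and $G$, so one must split $TM$ carefully into the line spanned by $U$, the directions generated by $V$, $\phi V$, $JV$ and their $\phi$-images, and the orthogonal complement, and then control $A$ on each piece. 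Wringing out of this system enough identities to reach a finite list, and then integrating it to recover the explicit immersions $f_1,f_2,f_3$, is the delicate part of the argument.
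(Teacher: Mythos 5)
Note first that this paper does not actually prove Theorem~\ref{thm:1.1}: it is imported verbatim from \cite{H-Y-Z} (hence the ``cf.''), and the machinery recalled in Section~\ref{sect:2} (the distribution $\mathfrak{D}={\rm Span}\{\xi,U,P\xi,PU\}$, the adapted frames for $\dim\mathfrak{D}=2$ or $4$, Lemma~\ref{lemma:2.1}) reflects the method of that source, namely a Hopf reduction followed by an explicit frame-by-frame case analysis on $\dim\mathfrak{D}$ and on the principal curvatures, culminating in an explicit identification of the resulting hypersurfaces with $f_1,f_2,f_3$. Your opening reductions are correct and agree with that route: $A\phi=\phi A$ together with $\phi U=0$ gives $AU=\alpha U$, so $M$ is Hopf and $\{U\}^{\bot}$ is $A$- and $\phi$-invariant, and the coupling with $P$ (via \eqref{eq:2.7}, \eqref{eq:2.8} and the Codazzi equation \eqref{eqn:2.15}) is indeed where all the work lies.

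The proposal, however, stops at the level of intentions precisely where the theorem's content begins, and its one concrete mechanism for closing the argument does not exist off the shelf. The middle step (``I would aim to show that $f$ is locally constant and that the eigenvalues of $A$ become determined'') is the whole difficulty: nothing in the sketch produces the quantitative identities (the analogues of \eqref{eq:2.16} and of the relations \eqref{eq:4.4}--\eqref{eq:4.11} used in this paper for the anticommuting case) that actually pin down $a,b,c$ and the principal curvatures, nor does it address the dichotomy $\dim\mathfrak{D}=2$ versus $4$, which is where the case analysis really branches. More seriously, the final passage from ``the induced data coincide with those of some $f_i$'' to ``$M$ is locally congruent to some $f_i$'' is justified by appeal to ``the uniqueness part of the fundamental theorem of hypersurfaces for the homogeneous NK $\mathbb{S}^3\times\mathbb{S}^3$''. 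No such Bonnet-type theorem is standard for this ambient: the space is homogeneous but not a real space form, so matching first and second fundamental forms together with $(\phi,U,\eta)$ and the $P$-data does not automatically yield an ambient isometry; such a rigidity statement would itself require a proof (e.g.\ a moving-frames/Maurer--Cartan argument), or one must instead integrate the structure equations explicitly to reconstruct the immersions, which is what the cited proof does. As it stands, the proposal is a plausible roadmap rather than a proof, with both the decisive computations and the congruence step missing.
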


One might realize that the next simplest relation between the shape
operator $A$ and the induced almost contact structure $\phi$ is the
anti-commutativity condition $A\phi+\phi A=0$. In this respect, to
our knowledge only Ki-Suh have shown that (cf. Lemma 2.1 and
Proposition 2.2 of \cite{K-S}), by denoting $\bar M^{n}(c)$ the
$n$-dimensional complex space form of constant holomorphic sectional
curvature $c$, if there exists a real hypersurface $M$ of $\bar
M^{n}(c)$ that satisfies the condition $A\phi+\phi A=0$, then $c=0$
and $M$ is cylindrical. To see how about other ambient spaces,
in this paper, we consider the question for two important
$6$-dimensional homogeneous NK manifolds, namely that the
homogeneous NK $\mathbb{S}^6$ and the homogeneous NK
$\mathbb{S}^3\times\mathbb{S}^3$. Our first result is the following
\begin{theorem}\label{thm:1.2}
The totally geodesic hypersurfaces of the homogeneous NK
$\mathbb{S}^6$ are the only hypersurfaces of $\mathbb{S}^6$
satisfying the condition $A\phi+\phi A=0$.
\end{theorem}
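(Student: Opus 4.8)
The plan is to establish the equivalence: a hypersurface $M$ of the homogeneous NK $\mathbb S^6$ satisfies $A\phi+\phi A=0$ if and only if it is totally geodesic. One direction is trivial, since a totally geodesic hypersurface has $A=0$, whence $A\phi+\phi A=0$ holds identically; the work lies in the converse. Fix a unit normal $\xi$, write $J\xi=-U$ (a tangent vector field) and $JX=\phi X+\eta(X)\xi$ for $X$ tangent, let $G(X,Y)=(\lcc_XJ)Y$ denote the totally skew NK torsion tensor of $\mathbb S^6$, and set $PX:=G(X,\xi)$. The induced almost contact metric structure satisfies $\phi U=0$, $\eta(U)=1$, $\phi^2=-\mathrm{id}+\eta\otimes U$ with $\ker\phi=\mathbb R U$, so applying $A\phi+\phi A=0$ to $U$ gives $\phi(AU)=0$ and hence $AU=\alpha U$ with $\alpha:=\eta(AU)$; thus $M$ is a Hopf hypersurface. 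Using the NK normalization $|G(X,Y)|^2=|X|^2|Y|^2-\langle X,Y\rangle^2-\langle JX,Y\rangle^2$ together with total skewness, I would then record that $PU=0$; that $P$ is skew-symmetric and restricts on $\mathcal D:=U^{\perp}$ to a complex structure anti-commuting with $\phi|_{\mathcal D}$, so that $\{\phi,P,\phi P\}$ is a quaternionic structure on the $4$-plane $\mathcal D$; that $\nabla_UU=0$; and that $A(\mathcal D)\subseteq\mathcal D$, the eigenvalues of $A|_{\mathcal D}$ occurring in pairs $\pm\mu$ with $\phi$ interchanging the corresponding eigenspaces.

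Next I would bring in the hypersurface equations. As $\mathbb S^6$ has constant curvature $1$, the Codazzi equation reads $(\nabla_XA)Y=(\nabla_YA)X$; and differentiating $J$ through the Gauss--Weingarten formulas gives the standard identities
\[
\nabla_XU=\phi AX-PX,\qquad (\nabla_X\phi)Y=\eta(Y)AX-\langle AX,Y\rangle U+G^{\top}(X,Y),
\]
where $G^{\top}$ is the tangential part of $G$, together with the companion formula for $\nabla P$ into which the nearly K\"ahler identity for $\lcc G$ enters explicitly. Differentiating $AU=\alpha U$ and feeding in Codazzi --- applied both to pairs $(X,U)$ and to pairs inside $\mathcal D$ --- yields, after the familiar Hopf-hypersurface bookkeeping, $\operatorname{grad}\alpha=U(\alpha)\,U$ (so $\alpha$ is constant along $\mathcal D$), $U(\mu)=0$ for every principal curvature $\mu$ of $A|_{\mathcal D}$, and the key relation
\[
(\alpha\,\mathrm{id}-A)(\phi AX-PX)=\nabla_U(AX)-A\nabla_UX\qquad(X\in\mathcal D).
\]

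The proof then splits into two cases. If $A|_{\mathcal D}=0$, the key relation collapses to $\alpha\,PX=-A\nabla_UX$ for all $X\in\mathcal D$; but $\langle\nabla_UX,U\rangle=0$ (using $\nabla_UU=0$) shows $\nabla_UX\in\mathcal D$, so $A\nabla_UX=0$, and since $P$ is nonsingular on $\mathcal D$ we get $\alpha=0$, i.e.\ $A\equiv 0$. The substantial case is $A|_{\mathcal D}\neq0$ (which would then hold on an open set, on which the relevant principal curvatures are smooth): choose a unit principal field $X\in\mathcal D$ with $AX=\mu X$, $\mu\neq0$, so that $A\phi X=-\mu\phi X$ and $\{X,\phi X,PX,\phi PX\}$ is an orthonormal frame of $\mathcal D$. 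Pairing the key relation successively with these four vectors and substituting the formulas for $\nabla U$ and $\nabla\phi$ --- so that the $G^{\top}$-components along $\mathcal D$ appear, which are rigid because $\langle G(\cdot,\cdot),\cdot\rangle$ is, up to a constant, the $SU(3)$-invariant $3$-form and its restriction to the quaternionic space $\mathcal D$ is fixed --- I expect to obtain an overdetermined first-order system in $\alpha$, $\mu$ and the connection coefficients of the frame whose only solution has $\mu=0$, contradicting the case hypothesis. Hence $A|_{\mathcal D}\equiv0$, and the first case gives $A\equiv 0$ on all of $M$, i.e.\ $M$ is totally geodesic.

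The hard part will be precisely this last case: organising the computation with the $\lcc G$-terms relative to the quaternionic frame $\{X,\phi X,PX,\phi PX\}$ so that the contradiction becomes transparent, while keeping sign conventions consistent throughout. Everything before it --- the algebraic consequences of $A\phi+\phi A=0$, the structure equations, and the case $A|_{\mathcal D}=0$ --- is essentially bookkeeping; the content of the theorem is that the $\lcc G$-terms are incompatible with a nonzero principal curvature on $\mathcal D$.
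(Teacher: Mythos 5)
Your reductions at the start are sound: $A\phi+\phi A=0$ forces $AU=\alpha U$, the eigenvalues of $A$ on $\mathcal D=\{U\}^{\perp}$ pair up as $\pm\lambda$ via $\phi$, the facts about $P=G(\cdot,\xi)$ (tangency, $PU=0$, $P^2=-\mathrm{id}$ on $\mathcal D$) are correct, and your Case $A|_{\mathcal D}=0$ argument giving $\alpha=0$ does work (it even avoids the paper's appeal to the geodesic-hypersphere branch of the Berndt--Bolton--Woodward classification). The genuine gap is the substantial case $A|_{\mathcal D}\neq 0$: there you only state that pairing the key relation $(\alpha I-A)(\phi AX-PX)=(\nabla_UA)X$ with the frame $\{X,\phi X,PX,\phi PX\}$ should produce an overdetermined system ``whose only solution has $\mu=0$.'' Nothing in the sketch shows this; the relation still contains the unknown term $(\nabla_UA)X$, i.e.\ derivatives of the principal curvatures and connection coefficients of the frame, and you give no reason why these cannot balance a nonzero $\lambda$. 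The vague appeal to ``rigidity'' of the restriction of $G$ to $\mathcal D$ does not eliminate these derivative terms, so the contradiction is asserted, not proved.

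The missing idea is the cancellation that makes this case trivial: instead of the Codazzi equation applied to the pair $(X,U)$, antisymmetrize it over $X,Y\in\{U\}^{\perp}$, i.e.\ use $g((\nabla_XA)Y-(\nabla_YA)X,U)=0$ together with $g((\nabla_XA)Y,U)=g((\nabla_XA)U,Y)$. This eliminates $(\nabla_UA)X$ and $X(\alpha)$ entirely and yields the paper's identity (the $\mathbb S^6$ analogue of Lemma \ref{lemma:2.1}),
\begin{equation*}
g((\mu I-A)G(X,\xi),Y)+g(G((\mu I-A)X,\xi),Y)-\mu g((A\phi+\phi A)X,Y)+2g(A\phi AX,Y)=0 .
\end{equation*}
Taking $AX=\lambda X$ with $X\in\{U\}^{\perp}$ unit and $Y=\phi X$, the $G$-terms vanish because $g(G(X,\xi),\phi X)=0$ and $A\phi X=-\lambda\phi X$, and one is left with $-2\lambda^{2}=0$ at once --- no system of equations is needed. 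The paper itself combines this identity with the Berndt--Bolton--Woodward classification of Hopf hypersurfaces of the NK $\mathbb S^6$ (geodesic hyperspheres or tubes around almost complex curves, with explicit principal curvatures $\tan(\theta\pm r)$, $-\cot r$), ruling out the tubes and forcing the hyperspheres to be totally geodesic; you use neither that classification nor the antisymmetrized identity, so as it stands your hard case is a plan rather than a proof. If you insert the displayed identity, your outline closes: it gives $A|_{\mathcal D}=0$ everywhere, and your own Case-1 argument then gives $\alpha=0$, i.e.\ $M$ totally geodesic.
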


For the homogeneous NK $\mathbb{S}^3\times\mathbb{S}^3$, however, in
Theorem 1.1 of \cite{H-Y-Z}, we have shown that it admits neither
totally umbilical hypersurfaces nor hypersurfaces having parallel
second fundamental form. Now, as the second result of this paper, a
further nonexistence theorem can be proved that is stated as below.
\begin{theorem}\label{thm:1.3}
The homogeneous NK $\mathbb{S}^3\times\mathbb{S}^3$ does not admit a
hypersurface that satisfies the condition $A\phi+\phi A=0$.
\end{theorem}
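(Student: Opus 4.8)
The plan is to extract the algebraic consequences of $A\phi+\phi A=0$, feed them into the fundamental equations of a hypersurface of the homogeneous NK $\mathbb{S}^{3}\times\mathbb{S}^{3}$ — in particular the Codazzi equation with the \emph{explicit} ambient curvature tensor — and analyse the resulting system until a contradiction appears. \emph{Step~1 (algebraic reduction).} As usual write $JX=\phi X+\eta(X)\nu$ for $X$ tangent to the hypersurface $M^{5}$, with $\nu$ a local unit normal, $U:=-J\nu$ tangent and $\eta(X)=g(X,U)$; then $(\phi,U,\eta,g)$ is an almost contact metric structure, so $\phi U=0$ and $\phi^{2}=-\mathrm{Id}+\eta\otimes U$. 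Applying the hypothesis to $U$ gives $\phi(AU)=0$, whence $AU=\alpha U$ for a function $\alpha$, so that $M$ is automatically a Hopf hypersurface. If $AX=\lambda X$ with $X\perp U$, then $A(\phi X)=-\phi(AX)=-\lambda\,\phi X$ and $\phi X\perp U$, and $\phi$ restricted to $U^{\perp}$ is invertible; hence the nonzero eigenvalues of $A$ on $U^{\perp}$ occur in pairs $\pm\lambda$ of equal multiplicity, with $\phi$ interchanging $E_{\lambda}$ and $E_{-\lambda}$. In particular $\operatorname{tr}A=\alpha$ and $A|_{U^{\perp}}$ is trace-free, and one fixes a local orthonormal frame adapted to $U^{\perp}=\bigoplus_i\bigl(E_{\lambda_i}\oplus\phi E_{\lambda_i}\bigr)\oplus\ker\!\bigl(A|_{U^{\perp}}\bigr)$.

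\emph{Step~2 (structure equations on $M$).} Recall that the NK $\mathbb{S}^{3}\times\mathbb{S}^{3}$ also carries the almost product structure $P$ ($P^{2}=\mathrm{Id}$, $g(P\cdot,\cdot)$ symmetric, $PJ=-JP$, $P$ nowhere parallel) and the skew tensor $G(X,Y):=\Dj{X}Y$. Restricting $P$ to $M$, I would split $P\nu$ into its normal and tangential parts, thereby introducing an angle function $\theta$ and a second tangent vector field $V$ on $M$, and record the action of $P$, $PU$, $PV$ and $d\theta$. From the nearly K\"ahler identity, from the known expression of $\DP{X}Y$ through $G$, $J$ and $P$, and from the Gauss--Weingarten formulas, one derives the covariant derivatives $\nabla_X\phi$, $\nabla_X\eta$, $\nabla_X U$ along $M$ and their $P$-counterparts; these are precisely the hypersurface structure equations used in \cite{H-Y-Z}. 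Finally I would write out the ambient curvature $\tilde R$ of $\mathbb{S}^{3}\times\mathbb{S}^{3}$ explicitly as a sum of a ``$g$-term'', a ``$J$-term'' and a ``$P$-term''.

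\emph{Step~3 (Codazzi and Gauss).} Substituting this $\tilde R$ into the Codazzi equation $(\nabla_X A)Y-(\nabla_Y A)X=\bigl(\tilde R(X,Y)\nu\bigr)^{\top}$ produces the core system linking $\nabla A$, $\alpha$, the $\lambda_i$, $\theta$, $U$, $V$, $\phi$ and $P$. I would then exploit it as follows: (i) put $Y=U$ and use $AU=\alpha U$ together with the formula for $\nabla_X U$; for $X\in E_{\lambda}$ this produces terms $\lambda(\lambda+\alpha)\,\phi X$, which, matched against the $U$-component of the equation, forces $\alpha$ to be locally constant and strongly constrains the $\lambda_i$ and the connection coefficients; (ii) differentiate $AX=\lambda X$ on each eigenspace and compare with the $\phi$-pairing of Step~1 to control $d\lambda_i$; (iii) take traces and invoke the Gauss equation to bring in $\operatorname{tr}A=\alpha$ and $\|A\|^{2}$. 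The ``$P$-term'' of $\tilde R$, which has no analogue in the complex space forms or in the constant-curvature $\mathbb{S}^{6}$, is exactly what makes the system over-determined.

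\emph{Step~4 (contradiction), and the main difficulty.} The endgame is a case analysis according to the number of distinct eigenvalues $\lambda_i$ and the position of $V$ relative to the eigenspaces and to $U$. In each case the relations of Step~3 should either collapse to a direct algebraic contradiction among the curvature identities, or force $A\equiv 0$; and the latter contradicts Theorem~1.1 of \cite{H-Y-Z}, by which the NK $\mathbb{S}^{3}\times\mathbb{S}^{3}$ carries no totally umbilical hypersurface. I expect Steps~3--4 to absorb essentially all the work: keeping simultaneous control of the two structure vector fields $U$ (coming from $J$) and $V$ (coming from $P$) and of the angle $\theta$, and showing that \emph{no} distribution of the eigenvalues $\alpha,\pm\lambda_1,\dots$ is compatible with the full Codazzi--Gauss system — in sharp contrast with the $\mathbb{S}^{6}$ case of Theorem~\ref{thm:1.2}, where the same scheme is allowed to terminate at the totally geodesic solution.
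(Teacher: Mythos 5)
Your skeleton coincides with the paper's own strategy: $A\phi+\phi A=0$ forces $AU=\mu U$ (Hopf), the principal curvatures on $\{U\}^\perp$ pair as $\pm\lambda$ under $\phi$, the key identity is the Hopf-type consequence of the Codazzi equation (Lemma \ref{lemma:2.1}, i.e. \eqref{eq:2.16}), the almost product structure enters through the decomposition of $P\xi$ (your angle $\theta$ and field $V$ play the role of the paper's $a,b,c$ and $e_1$, i.e. of the distribution $\mathfrak{D}=\mathrm{Span}\{\xi,U,P\xi,PU\}$), and the nonexistence of totally umbilical hypersurfaces from Theorem 1.1 of \cite{H-Y-Z} is invoked to exclude the umbilical configuration. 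The genuine gap is that everything beyond this setup is deferred: your Step 4 says the relations ``should either collapse to a direct algebraic contradiction \dots or force $A\equiv 0$'', which is the theorem restated as an expectation rather than proved. In the actual proof this is where all the work sits: one must split according to $\dim\mathfrak{D}\in\{2,4\}$; in the $2$-dimensional case the $P$-terms drop out of \eqref{eq:2.16} and $(X,Y)=(X_1,\phi X_1)$ gives the immediate contradiction $-\tfrac16=2\lambda^2$; in the $4$-dimensional case one needs, for each possible number $\nu=2,\dots,5$ of distinct principal curvatures, the adapted frame \eqref{eqn:2.22}, the fact that $\|G(X_1,X_2)\|^2=\tfrac13$ so that $G(X_1,X_2)$ is a nonzero vector in $\mathrm{Span}\{\xi,U\}$ (whence $\mu=0$), then $a=b=0$, $c=1$, then a comparison of two computations of $(\nabla_UA)e_i-(\nabla_{e_i}A)U$ via \eqref{eqn:2.15}, and in the hardest subcase ($\lambda=\beta=\tfrac{\sqrt3}{6}$) also the identity \eqref{eq:2.8} for $\tilde\nabla P$. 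None of these contradictions is identified, even qualitatively, in your proposal, so the proof is not yet there.

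Two specific assertions in Step 3 also need correction. First, the claim that matching the $U$-component of Codazzi ``forces $\alpha$ to be locally constant'' is unsupported and is not what happens: unlike in nonflat complex space forms, the Hopf principal curvature is not shown to be constant here; what the paper extracts from the $U$-direction is only $\mu=0$, and only in the $\dim\mathfrak{D}=4$ branch, via the length identity for $G$ quoted above. Second, no case of the analysis ends by ``forcing $A\equiv 0$''; the umbilical possibility is excluded once and for all at the start (by Theorem 1.1 of \cite{H-Y-Z}), and every remaining case terminates in a numerical incompatibility such as $-\tfrac16=2\lambda^2$, $k^2+l^2=\tfrac13$ versus $k=l=0$, or the clash between $a_{11}^2=\tfrac12$ and $m^2=\tfrac16$. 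So while your approach is the right one and parallels the published argument, as written it omits the case-by-case computations that constitute the proof.
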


\section{Preliminaries}\label{sect:2}

\subsection{The homogeneous NK structure on $\mathbb{S}^3\times\mathbb{S}^3$}\label{sect:2.1}~

In this subsection, we review some elementary notions and results
from \cite{B-D-D-V}.

By the natural identification $T_{(p,q)}(\mathbb{S}^3 \times
\mathbb{S}^3)\cong T_p\mathbb{S}^3 \oplus T_q\mathbb{S}^3$, we can
write a tangent vector at $(p,q)\in\mathbb{S}^3 \times \mathbb{S}^3$
as $Z(p,q)=(U_{(p,q)},V_{(p,q)})$ or simply $Z=(U,V)$. Then the
well-known almost complex structure $J$ on $\mathbb{S}^3\times
\mathbb{S}^3$ is given by
\begin{equation}\label{eq:2.1}
J Z(p,q) = \tfrac{1}{\sqrt{3}} (2pq^{-1}V - U, -2qp^{-1}U + V).
\end{equation}

Define the Hermitian metric $g$ on $\mathbb{S}^3\times
\mathbb{S}^3$ by
\begin{equation}\label{eq:2.2}
\begin{split}
g(Z, Z')&=\tfrac{1}{2}(\langle Z,Z'\rangle+\langle JZ, JZ' \rangle )\\
&=\tfrac{4}{3}(\langle U,U'\rangle+\langle V,V'\rangle)-\tfrac{2}{3}(\langle p^{-1}U, q^{-1}V'\rangle+\langle p^{-1}U', q^{-1}V\rangle),
\end{split}
\end{equation}
where $Z=(U, V),\ Z'=(U', V')$ are tangent vectors, and
$\langle\cdot,\cdot\rangle$ is the standard product metric on
$\mathbb{S}^3 \times \mathbb{S}^3$. Then $\{g,J\}$ give the
homogeneous NK structure on $\mathbb{S}^3 \times \mathbb{S}^3$.

As usual let $G$ be the (1,2)-tensor field defined by
$G(X,Y):=(\tilde \nabla_X J)Y$, where $\tilde{\nabla}$ is
Levi-Civita connection of $g$. Then, the following further formulas
hold:
\begin{gather}
G(X,Y)+G(Y,X)=0,\label{eq:2.3}\\
G(X,JY)+JG(X,Y)=0,\label{eq:2.4}\\
g(G(X,Y),Z)+g(G(X,Z),Y)=0,\label{eq:2.5}\\
\begin{aligned}\label{eq:2.6}
g(G(X,Y),G(Z,W))=&\tfrac{1}{3}\big[g(X,Z)g(Y,W)-g(X,W)g(Y,Z)\\
&\quad +g(JX,Z)g(JW,Y)-g(JX,W)g(JZ,Y)\big].
\end{aligned}
\end{gather}

An almost product structure $P$ on $\mathbb{S}^3\times\mathbb{S}^3$
is introduced by:
\begin{equation}\label{eq:2.7}
PZ=(pq^{-1}V, qp^{-1}U),\ \
 \forall\, Z=(U,V)\in T_{(p,q)}(\mathbb{S}^3\times \mathbb{S}^3).
\end{equation}
Then we have the following formula for $\tilde\nabla P$:
\begin{equation}\label{eq:2.8}
2\DP{X}Y=JG(X,PY)+JPG(X,Y).
\end{equation}
The curvature tensor $\tilde R$ of the homogeneous NK $\mathbb{S}^3
\times \mathbb{S}^3$ is given by:
\begin{equation}\label{eqn:2.9}
\begin{split}
\tilde{R}(X,Y)Z =& \tfrac{5}{12}\big[g(Y,Z)X - g(X,Z)Y\big]\\
 & +\tfrac{1}{12}\big[g(JY,Z)JX - g(JX,Z)JY - 2g(JX,Y)JZ\big]\\
 & +\tfrac{1}{3}\Big[g(PY,Z)PX - g(PX,Z)PY\\
 & \qquad +g(JPY,Z)JPX - g(JPX,Z)JPY\Big].
\end{split}
\end{equation}

\subsection{Hypersurfaces of the homogeneous NK $\mathbb{S}^3\times\mathbb{S}^3$}\label{sect:2.2}~

Let $M$ be a hypersurface of the homogeneous NK $\mathbb{S}^3
\times\mathbb{S}^3$ with $\xi$ its unit normal vector field. For any
vector field $X$ tangent to $M$, we have the decomposition
\begin{equation}\label{eqn:2.10}
JX=\phi X+ f(X)\xi,
\end{equation}
where $\phi X$ and $f(X)\xi$ are, respectively, the tangent and
normal parts of $JX$. Then $\phi$ is a tensor field of type (1,1),
and $f$ is a $1$-form on $M$. By definition, $\phi$ and $f$ satisfy
the following relations:
\begin{equation}\label{eqn:2.11}
\left\{
\begin{aligned}
&f(X)=g(X,U),\ \ f(\phi X)=0,\ \ \phi^2X=-X+f(X)U,\\
&g(\phi X,Y)=-g(X,\phi Y),\ \ g(\phi X,\phi Y)=g(X,Y)-f(X)f(Y),
\end{aligned}\right.
\end{equation}
where $U:=-J\xi$, which is called the {\it structure vector field}
of $M$. The equations \eqref{eqn:2.11} show that $(\phi,U,f)$ determines
an {\it almost contact structure} over $M$.

Let $\nabla$ be the induced connection on $M$ with $R$ its
Riemannian curvature tensor. The formulas of Gauss and Weingarten
state that
\begin{equation}\label{eqn:2.12}
\begin{split}
\tilde \nabla_X Y=\nabla_X Y + h(X,Y),\quad \tilde \nabla_X \xi=-
A X , \ \ \forall\, X,Y \in TM,
\end{split}
\end{equation}
where $h$ is the second fundamental form, and it is related to the
shape operator $A$ by $h(X,Y)=g(A X,Y)\xi$. Here, using the formulas
of Gauss and Weingarten, we have
\begin{equation}\label{eqn:2.13}
\nabla_X U=\phi AX-G(X,\xi).
\end{equation}

The Gauss and Codazzi equations of $M$ are given by
\begin{equation}\label{eqn:2.14}
\begin{split}
R(X,Y)Z=&\tfrac{5}{12}\big[g(Y,Z)X - g(X,Z)Y\big]\\
 &+ \tfrac{1}{12}\big[g(\phi Y,Z)\phi X - g(\phi X,Z)\phi Y - 2g(\phi X,Y)\phi Z\big]\\
 &+ \tfrac{1}{3}\Big[g(PY,Z)(PX)^\top - g(PX,Z)(PY)^\top\\
 &\qquad\,\ +g(JPY,Z)(JPX)^\top - g(JPX,Z)(JPY)^\top\Big]\\&
 +g(AZ,Y)AX-g(AZ,X)AY,
 \end{split}
\end{equation}
\begin{equation}\label{eqn:2.15}
\begin{split}
(\nabla_X A)Y-(\nabla_Y A)X=&\tfrac{1}{12}\big[g(X,U)\phi Y-g(Y,U)\phi X-2g(\phi X,Y)U\big]\\
&+ \tfrac{1}{3}\Big[g(PX,\xi)(PY)^\top-g(PY,\xi)(PX)^\top\\
 &\qquad+g(PX,U)(JPY)^\top-g(PY,U)(JPX)^\top\Big],
\end{split}
\end{equation}
where $\cdot^\top$ denotes the tangential part.

Following the usual terminology, we call a hypersurface $M$ of the
NK $\mathbb{S}^3 \times \mathbb{S}^3$ the {\it Hopf hypersurface} if
the integral curves of the structure vector field $U$ are geodesics
of $M$, that is $\nabla_U U=0$. It is also equivalent that the
structure vector field $U$ is a principal direction, with principal
curvature function denoted by $\mu$. A basic lemma for Hopf
hypersurfaces of the NK $\mathbb{S}^3 \times \mathbb{S}^3$ is
stated as follows:
\begin{lemma}\label{lemma:2.1}
Let $M$ be a Hopf hypersurface in the homogeneous NK $\mathbb{S}^3
\times \mathbb{S}^3$. Then we have
\begin{equation}
\begin{aligned}\label{eq:2.16}
\tfrac16&g(\phi X,Y)-\tfrac23\big[g(PX,\xi)g(PY,U)-g(PX,U)g(PY,\xi)\big]\\
&=g((\mu I-A)G(X,\xi),Y)+g(G((\mu I-A)X,\xi),Y)\\
&\ \ \ -\mu g((A\phi +\phi A)X,Y)+2g(A\phi AX,Y),\ \ X,Y\in
\{U\}^{\bot},
\end{aligned}
\end{equation}
where $\{U\}^{\bot}$ denotes a distribution of $TM$ that is
orthogonal to $U$, and $I$ denotes the identity transformation.
\end{lemma}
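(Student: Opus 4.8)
The plan is to combine the Hopf condition with the Codazzi equation \eqref{eqn:2.15}. Since $M$ is a Hopf hypersurface, $AU=\mu U$ for the principal curvature function $\mu$. Covariantly differentiating this relation along an arbitrary vector field $X$ and inserting $\nabla_X U=\phi AX-G(X,\xi)$ from \eqref{eqn:2.13}, one obtains
\begin{equation}\label{eq:plan-hopf}
(\nabla_X A)U=(X\mu)U+(\mu I-A)\big(\phi AX-G(X,\xi)\big).
\end{equation}
I would also record the elementary facts $\phi U=0$ and $g(U,U)=1$ (both immediate from $U=-J\xi$ together with \eqref{eqn:2.10} and \eqref{eqn:2.11}), as well as $g(JPY,U)=-g(PY,\xi)$ (from the $J$-invariance of $g$); these are what is needed to evaluate the curvature terms below.

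Fix now $X,Y\in\{U\}^{\bot}$. The $(1,1)$-tensor $\nabla_X A$ is symmetric with respect to $g$ (because $A$ is symmetric and $\nabla$ is metric), so $g((\nabla_X A)Y,U)=g((\nabla_X A)U,Y)$ and similarly with $X$ and $Y$ interchanged; moreover, the inner product of $(X\mu)U$ with a vector orthogonal to $U$ vanishes. Hence, plugging \eqref{eq:plan-hopf} into the Codazzi equation \eqref{eqn:2.15} contracted with $U$ shows that
\begin{equation}\label{eq:plan-LHS}
g\big((\mu I-A)(\phi AX-G(X,\xi)),Y\big)-g\big((\mu I-A)(\phi AY-G(Y,\xi)),X\big)
\end{equation}
equals the inner product of the right-hand side of \eqref{eqn:2.15} with $U$. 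Evaluating the latter by means of $g(X,U)=g(Y,U)=0$, $\phi U=0$, $g(U,U)=1$ and $g(JPY,U)=-g(PY,\xi)$, the first bracket of \eqref{eqn:2.15} contributes $-\tfrac16 g(\phi X,Y)$, while the four $P$- and $JP$-terms pair off to give $\tfrac23\big[g(PX,\xi)g(PY,U)-g(PX,U)g(PY,\xi)\big]$; thus this inner product is exactly the negative of the left-hand side of \eqref{eq:2.16}, so that the left-hand side of \eqref{eq:2.16} equals the negative of \eqref{eq:plan-LHS}.

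It then remains to rewrite the negative of \eqref{eq:plan-LHS} as the right-hand side of \eqref{eq:2.16}. Writing $B:=\mu I-A$ (symmetric) and expanding, I would treat the three resulting groups of terms in turn. For the $G$-terms, repeated use of the skew-symmetry \eqref{eq:2.3} together with \eqref{eq:2.5} converts $-g(BG(Y,\xi),X)$ into $g(G((\mu I-A)X,\xi),Y)$, which combined with $g(BG(X,\xi),Y)=g((\mu I-A)G(X,\xi),Y)$ yields the first two terms of \eqref{eq:2.16}. For the first-order terms $-\mu g(\phi AX,Y)+\mu g(\phi AY,X)$, the skew-symmetry of $\phi$ and the symmetry of $A$ give $-\mu g((A\phi+\phi A)X,Y)$. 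Finally, the remaining terms $g(A\phi AX,Y)-g(A\phi AY,X)$ collapse to $2g(A\phi AX,Y)$, since $A\phi A$ is skew-symmetric ($(A\phi A)^{\top}=-A\phi A$, by $A^{\top}=A$ and $\phi^{\top}=-\phi$). Assembling these three contributions gives \eqref{eq:2.16}.

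The part I expect to require the most care is the computational bookkeeping in the last paragraph: keeping track of signs when moving $\xi$ in and out of $G(\cdot,\cdot)$ via \eqref{eq:2.3} and \eqref{eq:2.5}, and correctly matching the four $P$- and $JP$-terms produced by \eqref{eqn:2.15}. Once those are handled correctly, the identity \eqref{eq:2.16} drops out.
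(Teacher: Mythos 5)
Your proposal is correct and follows essentially the same route as the paper: differentiate $AU=\mu U$ using \eqref{eqn:2.13} to get $(\nabla_XA)U$, use the symmetry of $\nabla_XA$ to compute $g((\nabla_XA)Y-(\nabla_YA)X,U)$, and equate this with the $U$-contraction of the Codazzi equation \eqref{eqn:2.15}, the sign bookkeeping via \eqref{eq:2.3}, \eqref{eq:2.5} and the skew-symmetry of $A\phi A$ being handled correctly.
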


\begin{proof}
A direct calculation of $(\nabla_{X}A)U$, with using $AU=\mu U$,
\eqref{eqn:2.13}, we have
\begin{equation}\label{eq:2.17}
(\nabla_{X}A)U=X(\mu)U+(\mu I-A)(-G(X,\xi)+\phi AX).
\end{equation}

It follows that, for $\forall\, X,Y\in \{U\}^{\bot}$,
\begin{equation}\label{eq:2.18}
g((\nabla_{X}A)Y,U)=g((\nabla_{X}A)U,Y)=g((\mu I-A)(-G(X,\xi)+\phi
AX),Y).
\end{equation}

Thus, we have
\begin{equation}
\begin{aligned}\label{eq:2.19}
g((\nabla_{X}A)Y-(\nabla_{Y}A)X,U)=&-g((\mu I-A)G(X,\xi),Y)-2g(A\phi AX,Y)\\
&-g(G((\mu I-A)X,\xi),Y)+\mu g((A\phi+\phi A)X,Y).
\end{aligned}
\end{equation}

On the other hand, by using the Codazzi equation \eqref{eqn:2.15},
we get
\begin{equation}\label{eq:2.20}
\begin{aligned}
&g((\nabla_{X}A)Y-(\nabla_{Y}A)X,U)\\
&=-\tfrac{1}{6}g(\phi
X,Y)+\tfrac{2}{3}(g(PX,\xi)g(PY,U)-g(PX,U)g(PY,\xi)).
\end{aligned}
\end{equation}

From \eqref{eq:2.19} and \eqref{eq:2.20}, we immediately get
\eqref{eq:2.16}.
\end{proof}

Before concluding this section, following that in \cite{H-Y-Z} we
introduce the distribution $\mathfrak{D}$. When we study hypersurfaces of
the NK $\mathbb{S}^3\times\mathbb{S}^3$, the consideration of $\mathfrak{D}$
is very helpful for the choice of a canonical frame. Precisely, for each point
$p\in M$, we define
$$
\mathfrak{D}(p):={\rm Span}\,\{\xi(p),U(p),P\xi(p),PU(p)\}.
$$

Since $P$ is anti-commutative with $J$, it is clear that $\mathfrak{D}$
defines a distribution on $M$ with dimension $2$ or $4$, and that it is
invariant under the action of both $J$ and $P$. Along $M$, let
$\mathfrak{D}^\bot$ denote the distribution in $T(\mathbb{S}^3
\times \mathbb{S}^3)$ that is orthogonal to $\mathfrak{D}$ at each
$p\in M$.

If $\dim\mathfrak{D}=4$ holds in an open set, then there exists a
unit tangent vector field $e_1\in \mathfrak{D}$ and functions
$a,b,c$ with $c>0$ such that
\begin{equation}\label{eqn:2.21}
P\xi=a\xi+bU+ce_1,\ \ a^2+b^2+c^2=1.
\end{equation}

Put $e_2=Je_1$. From the fact $\dim\,\mathfrak{D}^\bot=2$ and that
$\mathfrak{D}^\bot$ is invariant under the action of both $J$ and
$P$, we can choose a local unit vector field
$e_3\in\mathfrak{D}^\bot$ such that $Pe_3=e_3$. Put $e_4=Je_3$ and
$e_5=U$. Then $\{e_i\}_{i=1}^5$ is a well-defined orthonormal basis
of $TM$ and, acting by $P$, it has the following properties:
\begin{equation}\label{eqn:2.22}
\left\{
\begin{aligned}
&P\xi=a\xi+ce_1+be_5,\ \ Pe_1=c\xi-ae_1-be_2,\\
&Pe_2=ce_5-be_1+ae_2,\ \ Pe_3=e_3,\\
&Pe_4=-e_4,\ \ Pe_5=b\xi+ce_2-ae_5.
\end{aligned}\right.
\end{equation}

If $\dim\mathfrak{D}=2$ holds in an open set, then we can write
\begin{equation}\label{eqn:2.23}
P\xi=a\xi+bU,\ \ a^2+b^2=1.
\end{equation}

Now, $\mathfrak{D}^\bot$ is a $4$-dimensional distribution that is
invariant under the action of both $J$ and $P$. Hence, we can choose
unit vector fields $e_1,\, e_3\in\mathfrak{D}^\bot$ such that
$Pe_1=e_1,\, Pe_3=e_3$. Put $e_2=Je_1,\,e_4=Je_3$ and $e_5=U$. In
this way, we obtain an orthonormal basis $\{e_i\}_{i=1}^5$ of $TM$.
However, we would remark that such choice of $\{e_1, e_3\}$ (resp.
$\{e_2, e_4\}$) is unique up to an orthogonal transformation.

\section{Proof of Theorem \ref{thm:1.2}}\label{sect:3}

For basic results of the well-known NK $\mathbb{S}^6$, i.e., the
six-dimensional unit sphere $\mathbb{S}^6$ equipped with a
homogeneous NK structure $(J,g)$, of which $J$ is the almost complex
structure defined by using the vector cross product of purely
imaginary Cayley numbers $\mathbb{R}^7$ and $g$ is the metric
induced from the Euclidean space $\mathbb{R}^7$, we refer to
\cite{DOVV} and the references therein.

Let $M$ be an orientable hypersurface of the NK $\mathbb{S}^6$ with
$\xi$ its unit normal vector field. Then, the equations from
\eqref{eqn:2.10} up to \eqref{eqn:2.13} in subsection \ref{sect:2.2}
also hold, so that $M$ admits an almost contact metric structure
$(\phi,U,f,g)$ induced from the NK structure of $\mathbb{S}^6$,
whereas the Codazzi equation becomes
\begin{equation}\label{eqn:3.1}
(\nabla_X A)Y=(\nabla_Y A)X,\ \ \forall\, X,Y\in TM.
\end{equation}

For the NK $\mathbb{S}^6$, totally geodesic hypersurfaces do exist
and they trivially satisfy the relation $A\phi+\phi A=0$.

Now, we assume that $M$ is an orientable hypersurface of the NK
$\mathbb{S}^6$ that satisfies the condition $A\phi+\phi A=0$. Then,
by definition $\phi U=0$, we have $AU=\mu U$, i.e., $M$ is a
Hopf hypersurface and, $\mu$ is the principal curvature function
corresponding to the structure vector field $U$. Moreover, if
$X\in\{U\}^\bot$ is a principal vector field with principal
curvature function $\lambda$, then $A\phi X=-\phi AX=-\lambda \phi
X$ implies that $\phi X$ is also a principal vector field with
principal curvature function $-\lambda$.

Recall that Berndt-Bolton-Woodward (Theorem 2 of
\cite{B-B-W}) proved that a connected Hopf hypersurface of the NK
$\mathbb{S}^6$ is an open part of either a geodesic hypersphere of
$\mathbb{S}^6$ or a tube around an almost complex curve in the NK
$\mathbb{S}^6$, and the principal curvature function $\mu$ is
constant (Lemma 2 of \cite{B-B-W}).

Similar to the proof of Lemma \ref{lemma:2.1}, for Hopf
hypersurfaces of the NK $\mathbb{S}^6$, we can easily show that, by
using \eqref{eqn:2.13}, the following basic equation holds:
\begin{equation}\label{eqn:3.2}
\begin{aligned}
g((\mu I-A)&G(X,\xi),Y)+g(G((\mu I-A)X,\xi),Y)\\
&-\mu g((A\phi +\phi A)X,Y)+2g(A\phi AX,Y)=0,\ \ X,Y\in TM.
\end{aligned}
\end{equation}

If $M$ is a geodesic hypersphere, then $M$ is totally
umbilical and we have a function $\lambda$ on $M$ such that
$AX=\lambda X, \forall\, X\in TM$. This together with $A\phi+\phi
A=0$ implies that $\lambda=0$. Hence, $M$ is a totally geodesic
hypersurface.

If $M$ is a tube around an almost complex curve $\Gamma$ with radius
$r$ in $\mathbb{S}^6$, then, according to the proof of Proposition 2
and subsequent Remark in \cite{B-B-W}, we have $AU=-\cot r\,U$, and
the remaining principal curvatures on the distribution
$\{U\}^{\bot}$ are $\tan(\theta+ r),\,\tan(\theta-r)$ and $-\cot r$
for $\theta\in[0,\frac{\pi}{2})$ which is a function on $M$. Moreover, as
\cite{B-B-W} has pointed out, the hypersurface $M$ has exactly two
or three distinct principal curvatures at each point. We denote by
$\nu,\ 2\leq\nu\leq3,$ the maximum number of distinct principal
curvatures on $M$, then the set $M_\nu=\{x\in M|$$M$ has exactly
$\nu$ distinct principal curvatures at $x$\} is a non-empty open
subset of $M$. By the continuity of the principal curvature
function, each connected component of $M_\nu$ is an open subset, and
the multiplicities of distinct principal curvatures remain
unchanged on each connected component of $M_\nu$, so we can find a
local smooth frame field with respect to the principal curvatures.
The following discussion will be divided into two cases, depending on
the value of $\nu$.

\vskip 1mm

{\bf Case I}. $\nu=3$.

In this case, on each connected component of $M_3$, the
multiplicities of the distinct principal curvatures, namely
$\tan(\theta+r)$, $\tan(\theta-r)$ and $-\cot r$, should be $1,1$
and $3$, respectively. Then we have an orthonormal frame field
$\{X_i\}_{i=1}^5$ such that
\begin{equation}\nonumber
\left\{
\begin{aligned}
&AX_1=\tan(\theta+r)X_1,\ AX_2=\tan(\theta-r)X_2,\ AX_3=-\cot rX_3,\\
&AX_4=-\cot rX_4,\ AX_5=-\cot rX_5,\ X_5=U.
\end{aligned}\right.
\end{equation}

Applying the condition $A\phi+\phi A=0$, we have
$$
A\phi X_1=-\tan(\theta+r)\phi X_1,\ \ A\phi X_2=-\tan(\theta-r)\phi
X_2,\ \ A\phi X_3=\cot r\phi X_3.
$$

Taking $X=X_1$ and $Y=\phi X_1$ in \eqref{eqn:3.2}, and using
$A\phi+\phi A=0$, we get $\tan(\theta+r)=0$.
Analogously, taking $X=X_2$ and $Y=\phi X_2$ in \eqref{eqn:3.2}, we
get $\tan(\theta-r)=0$, which is a contradiction with
$\tan(\theta+r)\neq\tan(\theta-r)$. Thus, {\bf
Case I} does not occur.

\vskip 1mm

{\bf Case II}. $\nu=2$.

In this case, $M$ has exactly two distinct principal curvatures,
that is, two of the three principal curvatures $\tan(\theta+r)$,
$\tan(\theta-r)$ and $-\cot r$ are equal. Without loss of
generality, we assume that $\tan(\theta+r)=-\cot r$, so that the
multiplicities of the distinct principal curvatures, namely
$\tan(\theta-r)$ and $-\cot r$, are $1$ and $4$, respectively. Then,
we have an orthonormal frame field $\{X_i\}_{i=1}^5$ such that
\begin{equation}\nonumber
\left\{
\begin{aligned}
&AX_1=\tan(\theta-r)X_1,\ AX_2=-\cot rX_2,\ AX_3=-\cot rX_3,\\
&AX_4=-\cot rX_4,\ AX_5=-\cot rX_5,\ X_5=U.
\end{aligned}\right.
\end{equation}

Applying $A\phi+\phi A=0$, we get $A\phi X_1=-\tan(\theta-r)\phi
X_1$ and $A\phi X_2=\cot r\phi X_2$.
Then taking in \eqref{eqn:3.2} that $(X,Y)=(X_1,\phi X_1)$ and $(X,Y)=(X_2,\phi X_2)$,
respectively, we immediately get
$\tan(\theta-r)=-\cot r=0$. This is again a contradiction.

This completes the proof of Theorem \ref{thm:1.2}.\qed

\section{Proof of Theorem \ref{thm:1.3}}\label{sect:4}

To give the proof, we assume that $M$ is a hypersurface of the NK
$\mathbb{S}^3 \times\mathbb{S}^3$ which satisfies the condition
$A\phi+\phi A=0$. Then, by the fact $\phi U=0$, we see that $M$ is a
Hopf hypersurface with $AU=\mu U$. Moreover, if $X\in\{U\}^\bot$ is
a principal vector field with principal curvature function
$\lambda$, i.e., $AX=\lambda X$, then $A\phi X=-\phi AX=-\lambda
\phi X$ implies that $\phi X$ is also a principal vector field with
principal curvature function $-\lambda$. We denote
$\lambda,\,-\lambda,\,\beta,\,-\beta$ with $\lambda\geq0$ and
$\beta\geq0$ the four principal curvatures on distribution
$\{U\}^{\bot}$.
Since the only possible dimension of $\mathfrak{D}$ is $2$ or $4$,
we will divide the proof of Theorem \ref{thm:1.3} into the proofs of
two Lemmas. First, we have the following Lemma.

\begin{lemma}\label{lemma:4.1}
The case $\dim\mathfrak{D}=4$ does not occur.
\end{lemma}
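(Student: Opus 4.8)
The plan is to work with the orthonormal frame $\{e_i\}_{i=1}^5$ adapted to $\dim\mathfrak{D}=4$ from \eqref{eqn:2.21}--\eqref{eqn:2.22}, and derive a contradiction from the Hopf equation \eqref{eq:2.16} together with the structure equations for $G$ and $P$. First I would record the action of $\phi$ on this frame: since $U=e_5$, $\xi$ is normal, and $Je_1=e_2$, $Je_3=e_4$, one gets $\phi e_1=e_2$, $\phi e_2=-e_1$, $\phi e_3=e_4$, $\phi e_4=-e_3$, $\phi e_5=0$, while $f(e_i)=0$ for $i\le 4$ and $f(e_5)=1$. In particular $\{e_1,e_2,e_3,e_4\}$ spans $\{U\}^\bot$ and is $\phi$-invariant. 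Because $A\phi+\phi A=0$, the eigenspaces of $A$ on $\{U\}^\bot$ pair up under $\phi$: so after a rotation inside each $2$-plane I may assume $Ae_1=\lambda e_1$, $Ae_2=-\lambda e_2$, $Ae_3=\beta e_3$, $Ae_4=-\beta e_4$ — but here I must be careful, since the frame $\{e_1,e_2\}$ is already pinned down by \eqref{eqn:2.21} (it is determined by $P\xi$), so I cannot freely rotate; instead I should keep $A$ as a general anticommuting-with-$\phi$ operator and extract its matrix entries as unknown functions, or argue that the canonical frame can still be chosen compatibly. I expect the cleanest route is to substitute specific pairs $(X,Y)$ from $\{e_1,e_2,e_3,e_4\}$ into \eqref{eq:2.16} and read off scalar identities.

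The key computational inputs are: the values $G(e_i,\xi)$, which follow from $G(X,\xi)=(\tilde\nabla_X J)\xi$ and can be expanded in the frame using \eqref{eq:2.3}--\eqref{eq:2.6} — in fact \eqref{eq:2.6} with $Z=W$ gives $\|G(X,\xi)\|^2=\tfrac13(\|X\|^2-f(X)^2)$ for unit $\xi$, pinning down lengths, and \eqref{eq:2.5} controls the inner products $g(G(e_i,\xi),e_j)$; and the components $g(Pe_i,\xi)$, $g(Pe_i,U)$, which are read directly off \eqref{eqn:2.22} (e.g. $g(P\xi,\xi)=a$, $g(Pe_1,\xi)=c$, $g(P\xi,U)=b$, $g(Pe_2,U)=c$, and the rest involving $a,b,c$ or zero). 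Plugging $X=e_1,Y=e_2$ into \eqref{eq:2.16} should produce a relation of the shape $\tfrac16 + (\text{terms in }a,b,c) = -2\mu\lambda\cdot(\text{sign}) + 2g(A\phi A e_1,e_2) + (\text{$G$-terms})$, and similarly $X=e_3,Y=e_4$ gives a relation with $\beta$ replacing $\lambda$ and with the $P$-terms vanishing (since $Pe_3=e_3$, $Pe_4=-e_4$ are orthogonal to $\xi$ and $U$). Comparing these, together with the off-diagonal choices $X=e_1,Y=e_3$, $X=e_1,Y=e_4$, $X=e_2,Y=e_3$ etc., should overdetermine the functions $\lambda,\beta,\mu,a,b,c$.

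The main obstacle I anticipate is bookkeeping the $G(\,\cdot\,,\xi)$ terms: unlike the $P$-terms, $G(X,\xi)$ need not lie in $\mathfrak{D}$, so $g((\mu I-A)G(e_i,\xi),e_j)$ and $g(G((\mu I-A)e_i,\xi),e_j)$ couple the $\mathfrak{D}$-directions to $\mathfrak{D}^\bot = \mathrm{span}\{e_3,e_4\}$ in a way that depends on the unknown connection coefficients of the hypersurface; I will need the identity $G(e_i,e_j)$ evaluated on the frame (obtainable from \eqref{eq:2.6}) and the relation $G(X,\xi)\in TM\oplus\mathbb{R}\xi$ to split these into tangential pieces. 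A secondary subtlety is that $\dim\mathfrak{D}=4$ is only assumed on an open set, so the functions $a,b,c$ with $c>0$ are genuine; once the algebraic relations force, say, $c=0$, that is already the desired contradiction. I would organize the proof as: (i) compute $\phi$ and $A$ on the frame; (ii) compute all $G$- and $P$-components appearing in \eqref{eq:2.16}; (iii) substitute the six independent pairs from $\{U\}^\bot$ and collect equations; (iv) solve the resulting system and reach a contradiction, e.g. $c=0$ or $\lambda^2<0$. The hardest single step is (ii)–(iii), turning the abstract identity \eqref{eq:2.16} into explicit polynomial relations; everything after that should be elementary algebra.
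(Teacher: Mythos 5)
There is a genuine gap, and it is the one you yourself flag but do not resolve. First, the frame issue: you cannot ``argue that the canonical frame can still be chosen compatibly.'' The $P$-adapted frame $\{e_i\}$ of \eqref{eqn:2.21}--\eqref{eqn:2.22} is rigid (it is fixed by $P\xi$ up to little freedom), while the condition $A\phi+\phi A=0$ only diagonalizes $A$ in some other $J$-adapted frame $\{X_i\}$ of $\{U\}^\perp$; the relative position of the two frames is an essential unknown of the problem. The paper's proof handles this by writing $X_i=\sum_j a_{ij}e_j$ with $(a_{ij})\in SO(4)$ subject to the symmetry \eqref{eq:4.3}, and these entries $a_{ij}$ survive into the final equations (e.g.\ \eqref{eq:4.21}, \eqref{eq:4.22}); they are not eliminated by any compatibility choice. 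Without this bookkeeping your substitutions into \eqref{eq:2.16} cannot even be written down in closed form.

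Second, and more seriously, the pointwise identity \eqref{eq:2.16} together with the algebraic relations \eqref{eq:2.3}--\eqref{eq:2.6} and \eqref{eqn:2.22} is simply not enough to force a contradiction, so step (iv) of your plan (``solve the resulting system'') would fail. The quantities $g(G(X_i,\xi),X_j)$ are not determined algebraically: in the generic case the identity \eqref{eq:4.1} only yields $\mu=0$ (via the cancellation trick behind \eqref{eq:4.10}--\eqref{eq:4.11}, which itself requires the $A$-eigenframe with $X_3=JX_1$, $X_4=JX_2$) together with norm relations such as \eqref{eq:4.4}--\eqref{eq:4.5}, and these are perfectly consistent; no $c=0$ or $\lambda^2<0$ comes out (indeed the paper derives $a=b=0$, $c=1$, not $c=0$). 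The contradiction in the paper is obtained only after invoking genuinely differential information that your proposal never uses: the full tangential Codazzi equation \eqref{eqn:2.15} applied to $(\nabla_UA)e_i-(\nabla_{e_i}A)U$ (note \eqref{eq:2.16} encodes only its $U$-component), the resulting computation of the connection coefficients $\Gamma_{ij}^k$, a skew-symmetry argument forcing $k=l=0$ against $k^2+l^2=\tfrac13$, and, in one degenerate subcase, the covariant derivative formula \eqref{eq:2.8} for $\tilde\nabla P$. Moreover the whole argument must be run case-by-case according to the number of distinct principal curvatures (coincidences like $\lambda=\beta$ or $\mu=\pm\lambda$ change which equations are available), a dichotomy absent from your outline. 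So while your frame computations in steps (i)--(ii) are correct as far as they go, the core of the proof --- turning differential consequences of Codazzi and $\tilde\nabla P$ into the contradiction --- is missing.
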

\begin{proof}
Suppose that $\dim\mathfrak{D}=4$ does occur on some point of $M$.
We denote by $\Omega=\{x\in M|$ the dimension of $\mathfrak{D}$ is 4 at $x$\}, then $\Omega$ is an open set of $M$.
Since $A\phi+\phi A=0$, we can write \eqref{eq:2.16} on $\Omega$ as
\begin{equation}
\begin{aligned}\label{eq:4.1}
\tfrac16g(\phi X,Y)&-\tfrac23\big[g(PX,\xi)g(PY,U)-g(PX,U)g(PY,\xi)\big]=-2g(\phi A^2X,Y)\\
&+g\big((\mu I-A)G(X,\xi),Y\big)+g\big(G((\mu I-A)X,\xi),Y\big),\ \ \ \ X,Y\in
\{U\}^{\bot}.
\end{aligned}
\end{equation}

We denote by $\nu$ $(\nu\leq5)$ the maximum number on $\Omega$ of
distinct principal curvatures, then the set $\Omega_\nu:=\{x\in
\Omega\,|\,M$ has exactly $\nu$ distinct principal curvatures at
$x$\} is a non-empty open subset of $M$. By the continuity of the
principal curvature function, each connected component of
$\Omega_\nu$ is an open subset, the multiplicities of distinct
principal curvatures remain unchanged on each connected component
of $\Omega_\nu$, so we can find a local smooth frame field with
respect to the principal curvatures. From Theorem 1.1 of
\cite{H-Y-Z}, we know that $M$ can not be totally umbilical, even
locally. So the following discussion will be divided into four
cases, depending on the value of $\nu,\ 2\leq\nu\leq5$.

\vskip 1mm

\noindent{\bf Case I}. $\nu=5$.

\vskip 1mm

In this case, on each connected component of $\Omega_5$, we can have an orthonormal frame field
$\{X_{i}\}_{i=1}^{5}$  such that
\begin{equation}\label{eq:4.2}
AX_{1}=\lambda X_{1},\ \ AX_{2}=\beta X_{2},\ \ AX_{3}=-\lambda X_{3},
\ \ AX_{4}=-\beta X_{4},\ \ AX_{5}=\mu X_{5},
\end{equation}
where $X_{3}=JX_{1},\, X_{4}=JX_{2},\, X_{5}=U$. As $\nu=5$, we have
$\lambda>0,\, \beta>0,\,\lambda\neq\beta$ and
$\mu\not\in\{\lambda,-\lambda,\,\beta,-\beta\}$. Let
$\{e_{i}\}_{i=1}^{5}$ be the frame field as described in
\eqref{eqn:2.22}. Then, by assuming that $X_{i}=\sum_{j=1} ^4
a_{ij}e_{j}$ for $1\leq i\leq 4$, we have $(a_{ij})\in SO(4)$, and
by the choice of $\{e_{i}\}_{i=1}^{5}$ it holds that
\begin{equation}\label{eq:4.3}
a_{i+2,j}=(-1)^{j}a_{i,3-j},\ \ a_{i+2,j+2}=(-1)^{j}a_{i,5-j}, \ \
i,j=1,2.
\end{equation}

First, taking $X=X_i$ and $Y=X_j$ in \eqref{eq:4.1} for $1\leq
i<j\leq4$, using \eqref{eq:2.3}--\eqref{eq:2.5} and \eqref{eqn:2.22},
we can derive the following equations:
\begin{equation}\label{eq:4.4}
-\tfrac{1}{6}+\tfrac{2}{3}c^{2}a_{11}^{2}+\tfrac{2}{3}c^{2}a_{12}^{2}=2\lambda^{2},
\end{equation}
\begin{equation}\label{eq:4.5}
-\tfrac{1}{6}+\tfrac{2}{3}c^{2}a_{21}^{2}+\tfrac{2}{3}c^{2}a_{22}^{2}=2\beta^{2},
\end{equation}
\begin{equation}\label{eq:4.6}
\tfrac{2}{3}c^{2}a_{11}a_{21}+\tfrac{2}{3}c^{2}a_{12}a_{22}=(2\mu+\lambda-\beta)g(G(X_{1},X_{2}),U),
\end{equation}
\begin{equation}\label{eq:4.7}
\tfrac{2}{3}c^{2}a_{11}a_{21}+\tfrac{2}{3}c^{2}a_{12}a_{22}=-(2\mu-\lambda+\beta)g(G(X_{1},X_{2}),U),
\end{equation}
\begin{equation}\label{eq:4.8}
\tfrac{2}{3}c^{2}a_{11}a_{22}-\tfrac{2}{3}c^{2}a_{12}a_{21}=(2\mu-\lambda-\beta)g(G(X_{1},X_{2}),\xi),
\end{equation}
\begin{equation}\label{eq:4.9}
\tfrac{2}{3}c^{2}a_{11}a_{22}-\tfrac{2}{3}c^{2}a_{12}a_{21}=-(2\mu+\lambda+\beta)g(G(X_{1},X_{2}),\xi).
\end{equation}

The equations \eqref{eq:4.6} and \eqref{eq:4.7}, \eqref{eq:4.8}
and \eqref{eq:4.9} imply that
\begin{equation}\label{eq:4.10}
4\mu g(G(X_{1},X_{2}),U)=0,\ \ 4\mu g(G(X_{1},X_{2}),\xi)=0.
\end{equation}

From \eqref{eq:2.3}, \eqref{eq:2.4} and \eqref{eq:2.5} we see that,
for $1\leq i\leq4$, it holds $g(G(X_{1},X_{2}),X_{i})=0$. Thus,
$G(X_{1},X_{2})\in {\rm Span}\,\{\xi,U\}$. On the other hand, from
\eqref{eq:2.6}, we have
\begin{equation}\label{eq:4.11}
g(G(X_{1},X_{2}),G(X_{1},X_{2}))=\tfrac{1}{3}.
\end{equation}
It follows from \eqref{eq:4.10} that $\mu=0$.

Second, from the fact $AU=0$, we have
\begin{equation}\label{eq:4.12}
(\nabla_XA)U-(\nabla_{U}A)X=-A\nabla_XU-\nabla_{U}AX+A\nabla_{U}X.
\end{equation}

On the other hand, applying \eqref{eqn:2.22} to the Codazzi equation
\eqref{eqn:2.15}, we can get
\begin{equation}\label{eq:4.13}
(\nabla_{e_{1}}A)U-(\nabla_{U}A)e_{1}=-\tfrac{1}{12}e_{2}-\tfrac13\big[2acU-2abe_{1}+(2a^{2}-1)e_{2}\big],
\end{equation}
\begin{equation}\label{eq:4.14}
(\nabla_{e_2}A)U-(\nabla_{U}A)e_2=\tfrac{1}{12}e_1-\tfrac13\big[2bcU+(1-2b^2)e_1+2abe_2\big].
\end{equation}

Then, from \eqref{eq:4.12} and \eqref{eq:4.13}, calculating the
$U$-component of both the right hand sides, we can get $ac=0$.
Analogously, from \eqref{eq:4.12} and \eqref{eq:4.14}, we can get
$bc=0$. Therefore, according to \eqref{eqn:2.21}, we have $a=b=0$
and $c=1$.

Third, in order to apply the Codazzi equations, we need to calculate the
connections $\{\nabla_{X_{i}}X_{j}\}$. Put $\nabla_{X_i}X_j=\sum \Gamma_{ij}^{k}X_{k}$ with
$\Gamma_{ij}^{k}=-\Gamma_{ik}^{j}$, $1\le i,j,k\le 5$. Assume that
\begin{equation}\label{eq:4.15}
g(G(X_{1},X_{2}),\xi)=k,\ \ g(G(X_{1},X_{2}),U)=l.
\end{equation}
Then \eqref{eq:4.11} and the fact $G(X_{1},X_{2})\in {\rm
Span}\,\{\xi,U\}$ show that $k^{2}+l^{2}=\frac{1}{3}$.

By definition and the Gauss and Weingarten formulas, we have the
calculation
\begin{equation*}
G(X_1,\xi)=-\sum_{i=1}^5 \Gamma_{15}^{i}X_i+\lambda X_3.
\end{equation*}
However, according to \eqref{eq:4.15}, we also have
$G(X_1,\xi)=-kX_2+lX_4$. Hence, we obtain
\begin{equation}\label{eq:4.16}
\Gamma_{15}^{1}=0,\ \ \Gamma_{15}^{2}=k,\ \ \Gamma_{15}^{3}=\lambda,\ \ \Gamma_{15}^{4}=-l.
\end{equation}

Similarly, taking $(X,Y)=(X_i,\xi)$ in $G(X,Y)=(\tilde \nabla_X J)Y$
for $2\leq i\leq 4$, and by use of \eqref{eq:4.15}, we further obtain
\begin{equation}\label{eq:4.17}
\left\{
\begin{aligned}
&\Gamma_{25}^{1}=-k,\ \ \Gamma_{25}^{2}=0,\ \ \Gamma_{25}^{3}=l,\ \ \Gamma_{25}^{4}=\beta,\\
&\Gamma_{35}^{1}=\lambda,\ \ \Gamma_{35}^{2}=-l,\ \ \Gamma_{35}^{3}=0,\ \ \Gamma_{35}^{4}=-k,\\
&\Gamma_{45}^{1}=l,\ \ \Gamma_{45}^{2}=\beta,\ \ \Gamma_{45}^{3}=k,\ \ \Gamma_{45}^{4}=0.
\end{aligned}\right.
\end{equation}

Moreover, by using \eqref{eq:4.15} and the Gauss and Weingarten formulas, we get
\begin{equation}\label{eq:4.18}
lX_2+kX_4=G(U,X_1)=\sum_{i=1}^5\Gamma_{53}^{i}X_i-\sum_{i=1}^5 \Gamma_{51}^{i}JX_i.
\end{equation}

It follows that
\begin{equation}\label{eq:4.19}
\Gamma_{53}^{2}+\Gamma_{51}^{4}=l,\ \ \Gamma_{53}^{4}-\Gamma_{51}^{2}=k.
\end{equation}

Finally, we will calculate the expressions $(\nabla_U
A){e_i}-(\nabla_{e_i} A)U$ for $1\leq i\leq 4$.

On one hand, for each $1\leq i\leq 4$, we directly calculate
$(\nabla_U A)e_{i}-(\nabla_{e_{i}} A)U$, with the use of
$e_{i}=\sum_{j=1}^4{a_{ji}}X_{j}$ and the preceding results
\eqref{eq:4.16} and \eqref{eq:4.17}. Then we get an expression for
$(\nabla_U A){e_i}-(\nabla_{e_i} A)U$ in terms of the frame field
$\{X_{i}\}_{i=1}^{4}$.

On the other hand, for each $1\leq i\leq 4$, we calculate
$(\nabla_U A)e_{i}-(\nabla_{e_{i}} A)U$ by the Codazzi equation
\eqref{eqn:2.15}. Then, by using \eqref{eqn:2.22} and
$e_{i}=\sum_{j=1}^4{a_{ji}}X_{j}$, we get another expression of
$(\nabla_UA){e_i}-(\nabla_{e_i} A)U$ in terms of the frame field
$\{X_{i}\}_{i=1}^{4}$.

In this way, comparing both calculations of $(\nabla_U
A)e_{i}-(\nabla_{e_{i}} A)U$ for each $1\leq i\leq 4$, we get a
matrices equation $C=(a_{ij})^TB$, where
$$
C=
\left(
  \begin{array}{cccc}
    -\tfrac{1}{4}a_{12} & -\tfrac{1}{4}a_{22} & -\tfrac{1}{4}a_{11} & -\tfrac{1}{4}a_{21} \\
    \tfrac{1}{4}a_{11} & \tfrac{1}{4}a_{21} & -\tfrac{1}{4}a_{12} & -\tfrac{1}{4}a_{22} \\
    \tfrac{1}{12}a_{14} & \tfrac{1}{12}a_{24} & \tfrac{1}{12}a_{13} & \tfrac{1}{12}a_{23} \\
    -\tfrac{1}{12}a_{13} & -\tfrac{1}{12}a_{23} & \tfrac{1}{12}a_{14} & \tfrac{1}{12}a_{24}
  \end{array}
\right),
$$
$$
B=
\left(
  \begin{array}{cccc}
    U(\lambda) & (\lambda-\beta)\Gamma_{51}^{2}+\beta k & 2\lambda \Gamma_{51}^{3}-\lambda^{2} & (\lambda+\beta)\Gamma_{51}^{4}+\beta l \\
    (\beta-\lambda)\Gamma_{52}^{1}-\lambda k & U(\beta) & (\lambda+\beta)\Gamma_{52}^{3}-\lambda l & 2\beta \Gamma_{52}^{4}-\beta^{2} \\
    -2\lambda \Gamma_{53}^{1}+\lambda^{2} & (-\lambda-\beta)\Gamma_{53}^{2}-\beta l & -U(\lambda) & (\beta-\lambda)\Gamma_{53}^{4}+\beta k \\
    (-\lambda-\beta)\Gamma_{54}^{1}+\lambda l & -2\beta \Gamma_{54}^{2}+\beta^{2} & (\lambda-\beta)\Gamma_{54}^{3}-\lambda k & -U(\beta) \\
  \end{array}
\right).
$$
Thus, $B=(a_{ij})C:=(B_{ij})$. Using \eqref{eq:4.3}, it is
straightforward to verify that $B=(a_{ij})C$ is skew-symmetric. From
the facts $B_{12}+B_{21}=0$ and $\lambda\neq\beta$, we have
$\Gamma_{51}^{2}=\frac{k}{2}$. Moreover, from the facts
$B_{34}+B_{43}=0$ and $\lambda\neq\beta$, we have
$\Gamma_{53}^{4}=-\frac{k}{2}$. Combining these with \eqref{eq:4.19}
we get $k=0$. Analogously, from the facts $B_{23}+B_{32}=0$,
$B_{14}+B_{41}=0$, $\lambda+\beta\neq0$ and \eqref{eq:4.19}, we can
further get $l=0$. Thus, we get a contradiction to
$k^{2}+l^{2}=\tfrac{1}{3}$. This implies that {\bf Case I} does not
occur.

\vskip 1mm

\noindent{\bf Case II}. $\nu=4$.

\vskip 1mm

In this case, on a connected component of $\Omega_4$, without loss of generality, we are sufficient to
consider the following two subcases:

\noindent{\bf II-(i)}: $\lambda\neq \beta,\ \lambda>0,\ \beta>0$ and
$\mu\in\{\lambda,\beta,-\lambda,-\beta\}$.

\noindent{\bf II-(ii)}: $\lambda=0,\ \beta>0$ and
$\mu\not\in\{0,\beta,-\beta\}$.

For both of the above two subcases, following similar arguments as
the discussion of Case I from \eqref{eq:4.2} up to
\eqref{eq:4.11}, we can also get $\mu=0$. This is a
contradiction, showing that {\bf Case II} does not occur.

\vskip 1mm

\noindent{\bf Case III}. $\nu=3$.

\vskip 1mm

In this case, on a connected component of $\Omega_3$, without loss of generality, we are sufficient to
consider the following three subcases:

\noindent{\bf III-(i)}: $\lambda=0,\ \beta>0$ and $\mu\in\{\beta,-\beta\}$.

\noindent{\bf III-(ii)}: $\lambda=\mu=0$ and $\beta>0$.

\noindent{\bf III-(iii)}: $\lambda=\beta>0$ and
$\mu\not\in\{\lambda,-\lambda\}$.

\vskip 1mm

In case {\bf III-(i)}, similar arguments as
the discussion of Case I from \eqref{eq:4.2} up to
\eqref{eq:4.11}, we can get $\mu=0$. Thus, we get a contradiction.

\vskip 1mm

In case {\bf III-(ii)}, taking an orthonormal frame field
$\{X_{i}\}_{i=1}^{5}$ satisfying \eqref{eq:4.2}, we still have the
equations from \eqref{eq:4.4} up to \eqref{eq:4.14}. Then we can get
$c=1$. By calculating \eqref{eq:4.4}+\eqref{eq:4.5} and that $(a_{ij})\in
SO(4)$, we further have the conclusion
\begin{equation}\label{eq:4.20}
\lambda^{2}+\beta^{2}=\tfrac{1}{6}.
\end{equation}
By $\lambda=0$, we have $\beta=\frac{\sqrt{6}}{6}$. Then \eqref{eq:4.4}
and \eqref{eq:4.5} give that
\begin{equation}\label{eq:4.21}
a_{11}^{2}+a_{12}^{2}=\tfrac{1}{4},\ \
a_{21}^{2}+a_{22}^{2}=\tfrac{3}{4}.
\end{equation}

On the other hand, making the summation $\eqref{eq:4.6}^2+\eqref{eq:4.8}^2$,
we easily see that
\begin{equation*}
(a_{11}^{2}+a_{12}^{2})(a_{21}^{2}+a_{22}^{2})=\tfrac{1}{8},
\end{equation*}
which is a contradiction to \eqref{eq:4.21}.

\vskip 1mm

In case {\bf III-(iii)}, taking an orthonormal frame field
$\{X_{i}\}_{i=1}^{5}$ satisfying \eqref{eq:4.2}, we can also
derive the equations from \eqref{eq:4.4} up to \eqref{eq:4.11},
thus we have $\mu=0$. Then, similarly, we have the equations from
\eqref{eq:4.12} up to \eqref{eq:4.14}, so we get in \eqref{eqn:2.22}
that $a=b=0$ and $c=1$, and by calculating
\eqref{eq:4.4}+\eqref{eq:4.5}, we get
$\lambda=\beta=\tfrac{\sqrt{3}}{6}$. It follows from \eqref{eq:4.4},
\eqref{eq:4.5} and \eqref{eq:4.6} that
\begin{equation}\label{eq:4.22}
a_{11}^{2}+a_{12}^{2}=\tfrac{1}{2},\ \
a_{21}^{2}+a_{22}^{2}=\tfrac{1}{2},\ \ a_{11}a_{21}+a_{12}a_{22}=0.
\end{equation}

Let us put $a_{11}=\tfrac{1}{\sqrt{2}}\cos \theta_{1}$,
$a_{12}=\tfrac{1}{\sqrt{2}}\sin \theta_{1}$,
$a_{21}=\tfrac{1}{\sqrt{2}}\cos \theta_{2}$ and
$a_{22}=\tfrac{1}{\sqrt{2}}\sin \theta_{2}$. Then
$0=a_{11}a_{21}+a_{12}a_{22}=\tfrac{1}{2}\cos(\theta_{1}-\theta_{2})$
implies that $\theta_{1}-\theta_{2}=\tfrac{\pi}{2}(2k+1)$,
$k\in\mathbb Z$. Therefore, we have either $(a_{21},a_{22})=(a_{12},-a_{11})$
or $(a_{21},a_{22})=(-a_{12},a_{11})$. If
necessary by taking $-X_2$ instead of $X_2$, we are sufficient to
consider the case that $a_{21}=a_{12}$ and $a_{22}=-a_{11}$.

From \eqref{eq:4.22} and that $(a_{ij})\in SO(4)$, we further have
\begin{equation*}
a_{13}^{2}+a_{14}^{2}=\tfrac{1}{2},\ \
a_{23}^{2}+a_{24}^{2}=\tfrac{1}{2},\ \ a_{13}a_{23}+a_{14}a_{24}=0.
\end{equation*}
This implies that, similar to the preceding paragraph,
$(a_{23},a_{24})=(a_{14},-a_{13})$ or $(a_{23},a_{24})$ $=(-a_{14},a_{13})$.
If $a_{23}=a_{14}$ and $a_{24}=-a_{13}$, then
$X_{2}=-X_{3}$, which is impossible. Thus, $a_{23}=-a_{14}$ and
$a_{24}=a_{13}$ hold.

For simplicity, we put $m=-\tfrac{2\sqrt{6}}{3}a_{13}a_{14}$ and
$n=\tfrac{\sqrt{6}}{3}(a_{14}^2-a_{13}^2)$. Then
$m^{2}+n^{2}=\tfrac{1}{6}$.

Now, from \eqref{eqn:2.22} we can express $\{PX_i\}_{i=1}^4$ as
follows:
\begin{equation}\label{eqn:4.23}
\left\{
\begin{aligned}
&PX_1=a_{11}\xi+a_{12}U-\tfrac{\sqrt{6}}{2}nX_1+\tfrac{\sqrt{6}}{2}mX_2+\tfrac{\sqrt{6}}{2}mX_3+\tfrac{\sqrt{6}}{2}nX_4,\\
&PX_2=a_{12}\xi-a_{11}U+\tfrac{\sqrt{6}}{2}mX_1+\tfrac{\sqrt{6}}{2}nX_2+\tfrac{\sqrt{6}}{2}nX_3-\tfrac{\sqrt{6}}{2}mX_4,\\
&PX_3=-a_{12}\xi+a_{11}U+\tfrac{\sqrt{6}}{2}mX_1+\tfrac{\sqrt{6}}{2}nX_2+\tfrac{\sqrt{6}}{2}nX_3-\tfrac{\sqrt{6}}{2}mX_4,\\
&PX_4=a_{11}\xi+a_{12}U+\tfrac{\sqrt{6}}{2}nX_1-\tfrac{\sqrt{6}}{2}mX_2-\tfrac{\sqrt{6}}{2}mX_3-\tfrac{\sqrt{6}}{2}nX_4.
\end{aligned}\right.
\end{equation}

Then, applying the Codazzi equation \eqref{eqn:2.15}, we get
\begin{equation}\label{eq:4.24}
\begin{split}
(\nabla_{X_1}A)X_3-(\nabla_{X_3}A)X_1
=\tfrac{1}{6}U&+\tfrac{\sqrt{6}}{3}(a_{11}m-a_{12}n)X_1+
\tfrac{\sqrt{6}}{3}(a_{11}n+a_{12}m)X_2\\
&+\tfrac{\sqrt{6}}{3}(a_{11}n+a_{12}m)X_3
+\tfrac{\sqrt{6}}{3}(-a_{11}m+a_{12}n)X_4,
\end{split}
\end{equation}
\begin{equation}\label{eq:4.25}
\begin{split}
(\nabla_{X_1}A)X_4-(\nabla_{X_4}A)X_1=&\tfrac{\sqrt{6}}{3}(a_{11}n+a_{12}m)X_1+\tfrac{\sqrt{6}}{3}(-a_{11}m+a_{12}n)X_2\\
&+\tfrac{\sqrt{6}}{3}(-a_{11}m+a_{12}n)X_3+\tfrac{\sqrt{6}}{3}(-a_{11}n-a_{12}m)X_4.
\end{split}
\end{equation}

Let $\nabla_{X_i}X_j=\sum \Gamma_{ij}^{k}X_k$ with
$\Gamma_{ij}^k=-\Gamma_{ik}^j$, $1\le i,j,k\le 5$. Then, from
\eqref{eq:4.24} and \eqref{eq:4.25}, after calculating the left hand
sides of \eqref{eq:4.24} and \eqref{eq:4.25}, we get
\begin{equation}\label{eq:4.26}
\left\{
\begin{aligned}
&\Gamma_{13}^1=-\sqrt{2}(a_{11}m-a_{12}n),\ \ \Gamma_{13}^2=-\sqrt{2}(a_{11}n+a_{12}m),\\
&\Gamma_{14}^1=-\sqrt{2}(a_{11}n+a_{12}m),\ \
\Gamma_{14}^2=-\sqrt{2}(-a_{11}m+a_{12}n).
\end{aligned}\right.
\end{equation}

Next, \eqref{eq:4.8} gives that
$g(G(X_1,X_2),\xi)=\tfrac{\sqrt{3}}{3}$, and so that
$g(G(X_1,X_2),U)=0$ from \eqref{eq:4.11}. Then by the relations
\eqref{eq:2.3}--\eqref{eq:2.5} we can easily solve
$G(X_1,\xi)=-\tfrac{\sqrt{3}}{3}X_2$. Thus, by the Gauss and
Weingarten formulas, a direct calculation gives that
\begin{equation}\label{eq:4.27}
G(X_1,\xi)=(\tilde \nabla_{X_1} J)\xi=-\sum_{i=1}^5
\Gamma_{15}^{i}X_i+\tfrac{\sqrt{3}}{6}X_3.
\end{equation}
Hence, we have
\begin{equation}\label{eq:4.28}
\Gamma_{15}^{2}=\tfrac{\sqrt{3}}{3},\,\
\Gamma_{15}^{3}=\tfrac{\sqrt{3}}{6},\,\
\Gamma_{15}^{1}=\Gamma_{15}^{4}=0.
\end{equation}

By \eqref{eq:4.26} and \eqref{eq:4.28}, we obtain
\begin{equation}\label{eq:4.29}
\left\{
\begin{aligned}
&\nabla_{X_1}U=\tfrac{\sqrt{3}}{3}X_2+\tfrac{\sqrt{3}}{6}X_3,\\
&\nabla_{X_1}X_1=\Gamma_{11}^2X_2+\sqrt{2}(a_{11}m-a_{12}n)X_3+\sqrt{2}(a_{11}n+a_{12}m)X_4,\\
&\nabla_{X_1}X_2=\Gamma_{12}^1X_1+\sqrt{2}(a_{11}n+a_{12}m)X_3+\sqrt{2}(-a_{11}m+a_{12}n)X_4-\tfrac{\sqrt{3}}{3}U,\\
&\nabla_{X_1}X_3=-\sqrt{2}(a_{11}m-a_{12}n)X_1-\sqrt{2}(a_{11}n+a_{12}m)X_2+\Gamma_{13}^4X_4-\tfrac{\sqrt{3}}{6}U,\\
&\nabla_{X_1}X_4=-\sqrt{2}(a_{11}n+a_{12}m)X_1-\sqrt{2}(-a_{11}m+a_{12}n)X_2+\Gamma_{14}^3X_3.
\end{aligned}\right.
\end{equation}

Now, using that $G(X_1,X_2)=\tfrac{\sqrt{3}}{3}\xi$ and $G(X_1,\xi)=-\tfrac{\sqrt{3}}{3}X_2$, $a_{11}^2+a_{12}^2=\tfrac{1}{2}$ and
$m^{2}+n^{2}=\frac{1}{6}$, \eqref{eqn:4.23} and \eqref{eq:4.29}, by
direct calculations of both sides of
$$
2(\tilde{\nabla}_{X_1} P)X_2=JG(X_1,PX_2) +JPG(X_1,X_2),
$$
we obtain the following equations:
\begin{equation}\label{eq:4.30}
2X_1(a_{12})+2\sqrt{2}m-2a_{11}\Gamma_{12}^1=0,
\end{equation}
\begin{equation}\label{eq:4.31}
-2X_1(a_{11})-2\sqrt{2}n-2a_{12}\Gamma_{12}^1=0,
\end{equation}
\begin{equation}\label{eq:4.32}
\sqrt{6}X_1(m)+2\sqrt{6}n\Gamma_{12}^1=0,
\end{equation}
\begin{equation}\label{eq:4.33}
-\tfrac{4\sqrt{3}}{3}a_{11}+\sqrt{6}X_1(n)-2\sqrt{6}m\Gamma_{12}^1=0.
\end{equation}
Then, carrying the computations $\eqref{eq:4.30}\times
a_{12}-\eqref{eq:4.31}\times a_{11}$ and
$\eqref{eq:4.32}\times m+\eqref{eq:4.33}\times n$,
respectively, we get
$$
a_{11}n=0,\ \ a_{12}m=0.
$$

If $a_{11}=0$, we get $a_{12}^2=\tfrac{1}{2}$, $m=0$ and
$n^2=\tfrac{1}{6}$. Inserting these into \eqref{eq:4.32}, we
obtain $\Gamma_{12}^1=0$. Then by \eqref{eq:4.31}, we have $n=0$.
This yields a contradiction.

If $a_{11}\neq 0$, it holds that $a_{11}^2=\tfrac{1}{2}$,
$a_{12}=0$, $m^2=\tfrac{1}{6}$ and $n=0$. Then by \eqref{eq:4.30}
and \eqref{eq:4.33}, we have
$\tfrac{\sqrt{2}m}{a_{11}}=\Gamma_{12}^1=-\tfrac{\sqrt{2}a_{11}}{3m}$.
This contradicts to the facts $a_{11}^2=\tfrac{1}{2}$ and
$m^2=\tfrac{1}{6}$.

Thus, {\bf Case III} does not occur.

\vskip 1mm

\noindent{\bf Case IV}. $\nu=2$.

\vskip 1mm

In this case, we restrict the discussion on a connected component of
$\Omega_2$. It is easily seen that we are sufficient, without loss
of generality, to consider the following two subcases:

{\bf IV-(i)}: $\lambda=\beta>0,\ \mu\in\{\lambda,-\lambda\}$.

{\bf IV-(ii)}: $\lambda=\beta=0,\ \mu\neq0$.

\vskip 1mm

Actually, for both of the above two subcases, following similar
arguments as in the discussion of Case I from \eqref{eq:4.2} up to
\eqref{eq:4.11}, we can also get $\mu=0$. This is a
contradiction, showing that {\bf Case IV} does not occur.

We have completed the proof of Lemma \ref{lemma:4.1}.
\end{proof}

Next, we have the following Lemma.

\begin{lemma}\label{lemma:4.2}
The case $\dim\mathfrak{D}=2$ does not occur either.
\end{lemma}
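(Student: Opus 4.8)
The plan is to follow the same strategy that worked for Lemma~\ref{lemma:4.1}, but now under the assumption $\dim\mathfrak{D}=2$, using the adapted frame $\{e_i\}_{i=1}^5$ described in \eqref{eqn:2.23} and the paragraph following it, where $Pe_1=e_1$, $Pe_3=e_3$, $e_2=Je_1$, $e_4=Je_3$, $e_5=U$, and $P\xi=a\xi+bU$ with $a^2+b^2=1$. The key structural feature here is that $P$ acts much more simply on $\mathfrak{D}^\bot$ than in the $\dim\mathfrak{D}=4$ case, so the terms involving $P$ in \eqref{eq:2.16} and in the Codazzi equation \eqref{eqn:2.15} will be easier to control. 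First I would rewrite the Hopf equation \eqref{eq:2.16} using $A\phi+\phi A=0$ to obtain the analogue of \eqref{eq:4.1}, namely
\begin{equation*}
\tfrac16 g(\phi X,Y)-\tfrac23\big[g(PX,\xi)g(PY,U)-g(PX,U)g(PY,\xi)\big]
= -2g(\phi A^2X,Y)+g((\mu I-A)G(X,\xi),Y)+g(G((\mu I-A)X,\xi),Y),
\end{equation*}
for $X,Y\in\{U\}^\bot$, and then as before set up $\nu$, the maximal number of distinct principal curvatures on the open set $\Omega'=\{x\in M:\dim\mathfrak{D}(x)=2\}$, splitting into cases $2\le\nu\le5$ with the principal curvatures on $\{U\}^\bot$ being $\lambda,-\lambda,\beta,-\beta$.

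In each case I would build a principal orthonormal frame $\{X_i\}_{i=1}^5$ with $X_3=JX_1$, $X_4=JX_2$, $X_5=U$ as in \eqref{eq:4.2}, and express it in terms of $\{e_i\}$; since both $\{X_1,X_2\}$ and $\{e_1,e_3\}$ are orthonormal bases of the $J$-invariant $P$-invariant $4$-plane $\{\xi,U\}^\bot$, with $P$ acting as $+I$ on $\mathrm{Span}\{e_1,e_3\}$, the matrix relating them is orthogonal and the action of $P$ on the $X_i$ has a controlled form. Plugging $(X,Y)=(X_i,X_j)$ into the rewritten Hopf equation (using \eqref{eq:2.3}--\eqref{eq:2.5}) and, crucially, exploiting that $G(X_1,X_2)$ is forced into $\mathrm{Span}\{\xi,U\}$ by \eqref{eq:2.3}--\eqref{eq:2.5} while $g(G(X_1,X_2),G(X_1,X_2))=\tfrac13$ by \eqref{eq:2.6}, I expect to derive $\mu=0$ in the cases where $\mu\notin\{\pm\lambda,\pm\beta\}$ exactly as in Case~I of Lemma~\ref{lemma:4.1}, yielding an immediate contradiction. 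For the remaining configurations, I would next compute $(\nabla_X A)U-(\nabla_U A)X$ two ways: once directly from $(\nabla_X A)U=X(\mu)U+(\mu I-A)(\phi AX-G(X,\xi))$ together with $A\phi+\phi A=0$, and once from the Codazzi equation \eqref{eqn:2.15} using the explicit $P$-action from \eqref{eqn:2.23}; matching the two expressions should pin down the functions $a,b$ (I expect something like $ab=0$, hence $a=0$ or $b=0$) and the connection coefficients $\Gamma_{ij}^k$, and then a relation such as $\lambda^2+\beta^2=\tfrac16$ from adding the two instances of the diagonal Hopf equations.

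Having normalized $a,b$ and obtained the constraints on $\lambda,\beta$ and on the components $a_{ij}$ of the transition matrix, the endgame in each surviving subcase is to extract the remaining connection coefficients from the identities $G(X_i,\xi)=(\tilde\nabla_{X_i}J)\xi$ together with the Gauss--Weingarten formulas (as in \eqref{eq:4.27}--\eqref{eq:4.28}), feed everything into the structure equation \eqref{eq:2.8}, i.e.\ $2(\tilde\nabla_X P)Y=JG(X,PY)+JPG(X,Y)$ with a judicious choice of $X,Y$ among the $X_i$, and read off a system of PDEs in the unknown functions; comparing coefficients should force an algebraic contradiction with $a_{11}^2+a_{12}^2$-type normalizations, just as in case~\textbf{III-(iii)}. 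The main obstacle I anticipate is \textbf{not} the conceptual structure --- which mirrors Lemma~\ref{lemma:4.1} --- but the bookkeeping: in the $\dim\mathfrak{D}=2$ setting $\mathfrak{D}^\bot$ is $4$-dimensional and $P=+I$ there only up to the orthogonal ambiguity in the choice of $\{e_1,e_3\}$ versus $\{e_2,e_4\}$, so one must check that the argument is independent of that ambiguity, and the case $\nu=2$ with $\lambda=\beta$ may degenerate in a way that requires separately ruling out $\mu=\pm\lambda$ via the Gauss equation \eqref{eqn:2.14} or the parallel-second-fundamental-form obstruction from Theorem~1.1 of \cite{H-Y-Z}. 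Once that case analysis is closed, all possibilities are exhausted and $\dim\mathfrak{D}=2$ cannot occur, which together with Lemma~\ref{lemma:4.1} completes the proof of Theorem~\ref{thm:1.3}.
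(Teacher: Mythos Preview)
Your plan would eventually work, but it is enormously more complicated than necessary, and the proposal as written misses the one observation that collapses the entire argument to two lines. When $\dim\mathfrak{D}=2$ we have $P\xi=a\xi+bU$ (and hence $PU=b\xi-aU$), so for every $X\in\{U\}^\bot$ one has $g(PX,\xi)=g(X,P\xi)=0$ and $g(PX,U)=g(X,PU)=0$. Thus the bracketed $P$-term in \eqref{eq:2.16} vanishes identically for $X,Y\in\{U\}^\bot$, and the Hopf identity (with $A\phi+\phi A=0$) reduces to
\[
\tfrac16 g(\phi X,Y)=g\big((\mu I-A)G(X,\xi),Y\big)+g\big(G((\mu I-A)X,\xi),Y\big)-2g(\phi A^2X,Y).
\]
Now take any principal $X_1\in\{U\}^\bot$ with $AX_1=\lambda X_1$ and set $Y=\phi X_1=X_3$. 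Using \eqref{eq:2.3}--\eqref{eq:2.5} one checks $g(G(X_1,\xi),X_1)=g(G(X_1,\xi),X_3)=0$, so the two $G$-terms vanish, and the equation becomes $\tfrac16=-2\lambda^2$, an immediate contradiction. That is the paper's entire proof: no case split on $\nu$, no connection coefficients, no Codazzi computation, no use of \eqref{eq:2.8}.

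Your outline does note that ``$P$ acts much more simply on $\mathfrak{D}^\bot$'', but you then plan to merely ``control'' the $P$-terms rather than kill them outright, and you route the argument through deriving $\mu=0$, matching Codazzi expressions, extracting $ab=0$, and invoking \eqref{eq:2.8}. None of that is needed here; the very first diagonal instance of the Hopf equation (your analogue of \eqref{eq:4.4}, which in this setting has no $c^2$-term) already yields the contradiction. So the proposal is not wrong, but it overlooks the decisive simplification and would have you redo most of Lemma~\ref{lemma:4.1} for nothing.
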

\begin{proof}
In this case, we denote still by $\nu,\ \nu\leq5$, the maximum
number of distinct principal curvatures of $M$. Then the set
$M_\nu=\{x\in M\,|\,M$ has exactly $\nu$ distinct principal
curvatures at $x\}$ is a non-empty open subset of $M$. By the
continuity of the principal curvature function, each connected
component of $M_\nu$ is an open subset, the multiplicities of
distinct principal curvatures remain unchanged on each connected
component of $M_\nu$. So we can choose a local smooth frame field
with respect to the principal curvatures.

Now, by assumption $A\phi+\phi A=0$ and Lemma \ref{lemma:2.1}, we
can write \eqref{eq:2.16} as:
\begin{equation}\label{eq:4.34}
\begin{split}
\tfrac{1}{6}g(\phi X,Y)=g((\mu I-A)&G(X,\xi),Y)+g(G((\mu I-A)X,\xi),Y)\\
&-2g(\phi A^{2}X,Y),\ \ X,Y\in \{U\}^{\bot}.
\end{split}
\end{equation}

In a connected component of $M_\nu$, we take a local orthonormal
frame field $\{X_{i}\}_{i=1}^{5}$ of $M$ such that
\begin{equation*}
AX_1=\lambda X_1,\ \ AX_2=\beta X_2,\ \ AX_3=-\lambda X_3, \ \
AX_4=-\beta X_4,\ \ AX_5=\mu X_5,
\end{equation*}
where $X_{3}=JX_1,\, X_4=JX_2,\, X_5=U$. Then, taking
$(X,Y)=(X_1,\phi X_1)$ in \eqref{eq:4.34}, with using $AX_1=\lambda
X_1$ and $A\phi X_1=-\lambda \phi X_1$, we get
$-\tfrac{1}{6}=2\lambda ^{2}$, this is impossible and hence, we have
proved Lemma \ref{lemma:4.2}.
\end{proof}

Finally, from Lemmas \ref{lemma:4.1}, \ref{lemma:4.2} and the fact
that $\dim\,\mathfrak{D}$ can only be $2$ or $4$ at each point of
$M$, we get immediately the assertion of Theorem \ref{thm:1.3}. \qed


\normalsize\noindent

\vskip 5mm

\noindent{\sc School of Mathematics and Statistics, Zhengzhou
University, Zhengzhou 450001, People's Republic of China.}

\noindent E-mails: huzj@zzu.edu.cn; yaozkleon@163.com;
zhangxisq@163.com.

\end{document}